\def\EquationsBySection{\def\theequation
{\thesection.\arabic{equation}}%
\@addtoreset{equation}{section}}
  \newtheorem{defn}{Definition}[section]
\newtheorem{thm}{Theorem}[section]
       \newtheorem{lem}{Lemma}[section]
\newcommand\old[1]{}
\begin{document}
\date{}
\pagestyle{plain}
 \title{Random attractors of a stochastic Hopfield neural network model  with  delays}

%\author{ Wenjie Hu, Quanxin Zhu\footnote{The correspondent author's e-mail: zqx22@126.com.}.\\
% MOE-LCSM, School of Mathematics and Statistics,\\ Hunan Normal University, Changsha, Hunan 410081, China\\
%  }
\author{ Wenjie Hu$^{1,2}$, Quanxin Zhu$^{1}$ and  Peter E. Kloeden$^{3}$ \\
 $^1$\textit{{\footnotesize MOE-LCSM, School of Mathematics and Statistics, Hunan Normal University, Changsha 410081, China;}}\\
   $^2$\textit{{\footnotesize
 Journal House, Hunan Normal University, Changsha, Hunan 410081, China,}}\\
$^3$\textit{{\footnotesize Mathematisches Institut, Universit\"{a}t T\"{u}bingen, 72076 T\"{u}bingen, Germany}
     }}
\maketitle
%several novel sufficient conditions are derived to ensure the asymptotically stability of the the closed-loop stochastic impulsive system in probability and guarantee the given cost function have an upper bound.
%sufficient condition is proposed to ensure that the closed-loop stochastic impulsive system is asymptotically stable in probability and  the given cost function has an upper bound.
\begin{abstract}
The  global asymptotic behavior of a stochastic  Hopfield neural network model (HNNM) with delays  is explored by studying the existence and structure of random attractors. It is first proved that the trajectory field of the stochastic delayed HNNM admits an almost sure continuous version, which is compact for $t>\tau$ (where $\tau$ is the delay) by a delicate construction based on the random semiflow generated by the diffusion term.  Then, this version is shown to generate a  random dynamical system (RDS) by piece-wise linear approximation, after which  the  existence of a random absorbing set is obtained by a careful  uniform apriori estimate of the solutions. Subsequently,  the pullback asymptotic compactness of the RDS generated by the stochastic delayed HNNM is proved and hence  the existence of  random attractors is obtained. Moreover,  sufficient conditions under which the attractors turn  out to be an exponential attracting stationary solution are given.  Numerical simulations are also conducted at last to illustrate the effectiveness of the established results.
\end{abstract}

\noindent\textbf{Key Words:} Random attractor, random dynamical systems, stochastic  neural network, delay, piece-wise linear approximation

\section{Introduction and examples}
In \cite{hopfield1984neurons}, Hopfield proposed the following prominent HNNM
\begin{equation}\label{1.1}
 \displaystyle\frac{d u_i(t)}{d
 t}=-c_i u_i(t)+\sum_{j=1}^{n} h_{i j} f_{j}\left(u_{j}\left(t\right)\right).
\end{equation}
Here, $u_i(t), i=1,2, \cdots, n$  represent  the state of the $i$th neuron. $c_i$ stands for the reset velocity of the $i$th neuron. $\mathbf{H}=(h_{ij})$ corresponds to the connection matrix between neurons at time $t$, where $h_{ij}, i, j=1,2, \cdots, n$ are the weight coefficients measuring  the connection strength between different neurons. If the output of the $j$th neuron excites the neuron $i$, then we have $h_{ij}\geq 0$, otherwise,  $h_{ij} = 0$.  $f_i: R\rightarrow R, i=1,2, \cdots, n$ are the neuron activation functions. Due to the significant roles that  the HNNM \ref{1.1} has played in the areas of signal processing, optimization, parallel computations, pattern recognition, associate memories and so forth, it has  been extensively and  intensively investigated including both its dynamics analysis and real world applications from the time  that it was proposed. See, for instance \cite{Hopfield1986Computing}, \cite{hopfield1984neurons}, \cite{tank1987neural} and the references therein.

In model \ref{1.1}, it is assumed that   the updating and propagation of neuron signals occur instantaneously  without  time delay. Nevertheless,  time delays are omnipresent both in neural processing and in signal transmission and  must be  taken into
account in  the  communication channels. Hence, Marcus and Westervelt \cite{marcus1989stability} incorporated time delays arising from distance between the neurons or axonal conduction time between neurons into system \ref{1.1} leading to the following delayed neural network model
\begin{equation}\label{1.2}
 \displaystyle\frac{d u_i(t)}{d
 t}=-c_i u_i(t)+\sum_{j=1}^{s} h_{i j} f_{j}\left(u_{j}\left(t\right)\right)+\\
\sum_{j=1}^{n} b_{i j} g_{j}\left(u_{j}\left(t-\tau_{j}\right)\right),
\end{equation}
where  $\tau_{ j},j=1,\cdots,n$ denote  the delay caused by  the transmission  along the axon of the $j$-th unit.  $b_{i j}$ represents the strength of the $j$-th unit on the $i$-th unit at time  $t-\tau_{j}$.  The $g_{j}$ term gives  the corresponding output of the $j$-th unit at time $t-\tau_{j}$.  The dynamical behavior of solutions to \ref{1.2} has drawn  much research interest pertaining to both dynamics and applications. For example,
\cite{Morita1993Associative} and \cite{YOSHIZAWA1993Capacity} compared the memory capacity of \ref{1.2} under different activation functions, while \cite{Gopalsamy1994Stability} and \cite{Van1998Global} obtained the global asymptotic stability of the steady state by  the Lyapunov method.

In  real nervous systems, signal transmission along axons is a noisy process caused by random fluctuations from the release of neurotransmitters and other random effects, as pointed by \cite{Haykin}. Therefore,  a more realistic and accurate mathematical model should be the following stochastic delayed HNNM
\begin{equation}\label{1.3}
\left\{\begin{array}{l}
d u_{i}(t)=[-c_{i} u_{i}(t)+\sum_{j=1}^{n} h_{i j} f_{j}\left(u_{j}(t)\right)+\\
\sum_{j=1}^{n} b_{i j} g_{j}\left(u_{j}\left(t-\tau_{j}\right)\right)] d t+\sum_{j=1}^{n} \sigma_{i j}u_{j}(t) d w_{j}(t) \\
u_{i}(\xi)=\phi_{i}(\xi), \quad-\max _{1 \leq k \leq n} \tau_{k} \leq \xi \leq 0,
\end{array}\right.
\end{equation}
which is obtained by adding an multiplicative noise $\sum_{j=1}^{n} \sigma_{i j}u_{j}(t) d w_{j}(t)$ to \ref{1.3}. Here, $\sigma_{i j}$ are parameters representing the  density of stochastic effects, $w_j(t), j=1,2,\cdots, n$ are mutually independent two-sided real-valued Wiener process  on $\left(\Omega, \mathcal{F}, P\right)$ with a filtration $\{\mathcal{F}_t, t\geq 0\}$. $\phi_{i}(\xi), i=1, \ldots, n$  are the initial conditions. Let $\tau=\max _{1 \leq k \leq n} \tau_{k}$ and  take
$$u(t)=\left(u_{1}(t), u_{2}(t), \ldots, u_{n}(t)\right)^{\mathrm{T}},$$
$$ u_t(\tau)=\left(u_{1}\left(t-\tau_{1}\right), u_{2}\left(t-\tau_{2}\right), \ldots, u_{n}\left(t-\tau_{n}\right)\right)^{\mathrm{T}},$$
$$\quad C=\operatorname{diag}\left(c_{1}, c_{2}, \ldots, c_{n}\right), \quad H=\left(h_{i j}\right)_{n \times n},  \quad B=\left(b_{i j}\right)_{n \times n}, $$
$$f(u)=\left(f_{1}\left(u_{1}\right), f_{2}\left(u_{2}\right), \ldots, f_{n}\left(u_{n}\right)\right)^{\mathrm{T}},$$
 $$g(u)=\left(g_{1}\left(u_{1}\right), g_{2}\left(u_{2}\right), \ldots, g_{n}\left(u_{n}\right)\right)^{\mathrm{T}},$$
$$\Sigma  =(\sigma_{i j})_{n \times n}, $$
$$w(t)=\left(w_{1}(t), w_{1}(t), \ldots, w_{n}(t)\right)^{\mathrm{T}}, $$
$$\phi(t)=\left(\phi_{1}(t), \phi_{2}(t), \ldots, \phi_{n}(t)\right)^{\mathrm{T}}.$$
Then, model \ref{1.3} can be  rewritten  in the following matrix form
\begin{equation}\label{1.4}
\left\{\begin{array}{l}
d u(t)=\left[-C u(t)+H f(u(t))+B g\left(u(t-\tau)\right)\right] d t\\
+\Sigma u(t)\diamond w(t),\\
u(t)=\phi(t), \quad-\tau \leqslant t \leqslant 0, \quad
\end{array}\right.
\end{equation}
in which $\phi=\{\phi(s):-\tau \leqslant s \leqslant 0\}$ is $\mathcal{C}=C([-\tau, 0],\mathbb{R}^{n})$-valued function, where $\mathcal{C}$ is the
space of all continuous $\mathbb{R}^{n}$-valued functions defined on $[-\tau, 0]$ with a norm $\|\xi\|=\sup \{|\xi(t)|:-\tau \leqslant t \leqslant 0\}$ and $|\cdot|$ is the Euclidean norm of a vector $x \in \mathbb{R}^{n}$. Here $u(t)\!\diamond \!dw(t)$ is interpreted as the vector with $j$th component $u_j(t) dw_j(t)$, $j$ $=$ $1$, $\cdots$, $n$.

The local stability around equilibria of the stochastic HNNM and  the stochastic delayed HNNM  has been widely and thoroughly studied in the past decades by employing It\^{o}'s formula, stochastic analysis techniques and inequality techniques. See, for instance, \cite{SC2007pth,WS2005mean,LD2017mean} and  \cite{ZC2014pth} investigate stability of  \ref{1.4} with fixed or variant delays. On the other hand, there are also some other works concerning global stability for equilibrium of stochastic delayed HNNM by adopting the  Lyapunov-Krasovskii functional methods together with stochastic analysis techniques, see for example \cite{LYW08,WWC09,WWC11} and the references therein.

Nevertheless, Lyapunov-Krasovskii functional methods depend critically on the construction of specified Lyapunov-Krasovskii functional which is different from equation to equation. Moreover, the criterion is in linear matrix inequality form that depends highly on numerical tools,  and the condition on the drift term and the random coefficient is strict. From the dynamical systems point of view, if we can find a compact set that is invariant under the system and attracts all orbits of the system, i.e., if we can find a global attractor of the system, then we can obtain the asymptotic behavior of the system by investigating the structure of the attractor. This approach is effective  especially for infinite dimensional dynamical systems generated by partial differential equations or delay differential equations. Generally, the attractors of dynamical systems may be  fixed points,  periodic orbits, invariant manifolds  or chaotic attractors under certain conditions. Therefore, by investigating the attractors of dynamical systems, one can obtain a full view of complex dynamics of the systems. However, to our   knowledge, the existence and structure of attractors for the stochastic delayed HNNM \ref{1.4} have not been studied, although the existence and structure of attractors of the deterministic delayed  HNNM, the delayed non-autonomous neural network models as well as random neural field lattice models have been studied in \cite{liz2013attractivity,HZ22,SWHK20,HKU19,YWK21,WKH21}.  It should be pointed out that, in the very recent work \cite{SWHK20}, the authors investigated the attractors of a random recurrent neural networks with delays. Unlike the delayed random neural networks that naturally generates an RDS, the existence of an RDS generated by stochastic delayed HNNM \ref{1.4} needs much effort to established. Indeed, the main difficulty for obtaining the attractors of stochastic delayed HNNM \ref{1.4} lies in showing the model generates a random dynamical system,  which  naturally holds for deterministic case.

There are only very few papers tackling the attractors for the stochastic delayed differential  equations, see for example, \cite{WK}, \cite{YLJ2010} and \cite{GL2022}. In \cite{WK}, the authors proved that the stochastic delayed differential  equation  generates a mean square RDS processing  a mean-square random attractor in the mean-square sense \cite{PT2012}, which is not the random attractor in the classical pathwise sense. For a detail concept of the mean-square random attractors and mean-square RDSs, the readers are refereed to the review paper \cite{CK15}. In \cite{YLJ2010} and \cite{GL2022} the authors obtained  the existence and structure of random attractors for  stochastic delayed differential  equations with additive noise.
It should be noted that the exponential Ornstein-Uhlenbeck process transformation as \cite{DLS03} did for stochastic (partial) partial differential equations may also be effective for transforming the stochastic delayed equation into random delayed equation, but  a time delay then occurs in the  noise term, which makes it difficult to establish the existence  of a random dynamical system as well as that of tempered absorbing sets. This motivates us to propose a new method  to investigate  the existence of an RDS generated by \ref{1.4} with general multiplicative noise, the existence and structure of random attractors of \ref{1.4}.

We prove that the trajectory field of the stochastic delayed HNNM admits an almost sure continuous and measurable  compact version  for $t>\tau$  which is constructed by random semiflow generated by the linear multiplicative noise.  Then, this version is shown to generate a  RDS  by the trick of piece-wise linear  approximation of the version. The idea originates from the early works \cite{M90} \cite{MS2003}, \cite{MS2004} where stochastic delayed differential equations are recast  into a Hilbert space to study the Lyapunov spectrum and invariant manifolds.  Our work can be regarded as an  attempt to study the random attractors of the HNNM \ref{1.4} in Banach spaces. Very recently, \cite{VRS} investigated the  singular stochastic delay differential equations  in Banach spaces by  rough path theory, which is quite different from approach of the present work. Our main contributions are in the following three folds. Firstly, a novel method for proving the stochastic delayed HNNM  generates an RDS in the natural phase space, i.e. $\mathcal{C}=C([-\tau, 0],\mathbb{R}^{n})$ is established, which is quite different from the situation in the early works \cite{M90} where a Hilbert space is taken as the phase space. Secondly, existence of random attractors of the model under appropriate conditions have been proved. Thirdly, optimal conditions under which that the attractor is a  random stationary solution is obtained.  vMoreover, the method can also be generalized to other stochastic evolution equations with delays, such as the stochastic delayed partial differential equations, for tackling the problem  of topological dimension estimation, Lyapunov exponents,  as well as the invariant manifolds that depends on showing the existence of RDSs.

This paper is organized as follows. In Section 2, we first introduce the theory of RDSs and random attractors as well as some notations and preliminary lemmas needed for the proof of our main results. In Section 3, based on the  random semiflow generatated by the diffusion term of \ref{1.4}, we  transform  \ref{1.4} into a random delayed differential equation  admitting a measurable version that generates an RDS. For the purpose of showing  the existence of random attractor for  \ref{1.4}, we first give a uniform a priori estimate of the solution  and then show the asymptotic compactness of the RDS generated by \ref{1.4} in Section 4, which implies  the existence of random attractor following the results in  \cite{9,10}. In Section 5, we derive  sufficient conditions for  the random attractor being an exponential stationary solution.  Finally, we illustrate the obtained results  by numerical simulations.

\section{Preliminaries}
We first introduce the concepts of a random attractor and an RDS  as well as some important lemmas following \cite{AL,17,9,10} and \cite{FS}.

\begin{defn}\label{defn1}
Let $\left\{\theta_{t}: \Omega \rightarrow \Omega, t \in \mathbb{R}\right\}$ be a family of measure preserving transformations such that $(t, \omega) \mapsto \theta_{t} \omega$ is measurable and $\theta_{0}=\mathrm{id}$, $\theta_{t+s}=\theta_{t} \theta_{s},$ for all $s, t \in \mathbb{R}$. The flow $\theta_{t}$ together with the probability space $\left(\Omega, \mathcal{F}, P,\left(\theta_{t}\right)_{t \in \mathbb{R}}\right)$ is called a metric dynamical system.
\end{defn}

 For a given separable Hilbert space $(X, \|\cdot\|_X)$,  denote by $\mathcal{B}(X)$ the  Borel-algebra of open subsets in $X$.

\begin{defn}\label{defn2}
A mapping $\Phi: \mathbb{R}^{+} \times \Omega \times X   \rightarrow X$ is said to be a random dynamcial system  (RDS) on a complete separable metric space  $(X,d)$ with Borel  $\sigma$-algebra  $\mathcal{B}(X)$ over the metric dynamical system $\left(\Omega, \mathcal{F}, P,\left(\theta_{t}\right)_{t \in \mathbf{R}}\right)$ if \\
(i) $\Phi(\cdot, \cdot, \cdot): \mathbb{R}^{+} \times \Omega \times X   \rightarrow X$ is $(\mathcal{B}(\mathbb{R}^{+})\times \mathcal{F}\times\mathcal{B}(X), \mathcal{B}(X))$-measurable;\\
(ii) $\Phi(0, \omega,\cdot)$ is the identity on  $X$ for $P$-a.e. $\omega \in \Omega$;\\
(iii) $\Phi(t+s, \omega,\cdot)=\Phi(t, \theta_{s} \omega,\cdot) \circ \Phi(s, \omega,\cdot),   \text { for all } t, s \in \mathbb{R}^{+}$ for $P$-a.e. $\omega \in \Omega$.\\
A RDS $\Phi$ is continuous or differentiable if $\Phi(t, \omega,\cdot): X \rightarrow X$ is continuous or differentiable for all $t\in \mathbb{R}^+$ and $P$-a.e. $\omega \in \Omega$.
\end{defn}

\begin{defn}\label{defn3} A set-valued map $\Omega \ni \omega \mapsto D(\omega) \in 2^{X}$ is said to be a random set in $X$ if the mapping $\omega \mapsto d(x, D(\omega))$ is $(\mathcal{F}, \mathcal{B}(\mathbb{R}))$-measurable for any $x \in X,$ where $d(x, D(\omega))\triangleq \inf _{y\in  D(\omega)} \mathrm{d}(x, y)$ is the distance in $X$ between the element $x$ and the set $D(\omega)  \subset X$.
\end{defn}

\begin{defn}\label{defn4} A random set $\{D(\omega)\}_{\omega \in \Omega}$ of $X$ is called tempered with respect to $\left(\theta_{t}\right)_{t \in \mathbb{R}}$ if for $P$-a.e. $\omega \in \Omega$,
$$
\lim _{t \rightarrow \infty} e^{-\beta t} d\left(D\left(\theta_{-t} \omega\right)\right)=0, \quad \text { for all } \beta>0,
$$
where $d(D)=\sup _{x \in D}\|x\|_{X}$.
\end{defn}

\begin{defn}\label{defn5}
Let $\mathcal{D}=\{\{D(\omega)\}_{\omega \in \Omega}: D(\omega)\subset X, \omega\in\Omega\}$ be a family of random sets.  A random set $\{K(\omega)\}_{\omega \in \Omega}  \in \mathcal{D}$ is said to be a $\mathcal{D}$-pullback absorbing set for $\Phi$ if for $P$-a.e. $\omega \in \Omega$ and for every $B \in \mathcal{D},$ there exists $T=T(B, \omega)>0$ such that
\[
\Phi\left(t, \theta_{-t} \omega,B\left(\theta_{-t} \omega\right)\right)  \subseteq K(\omega) \quad \quad \text { for all } t \geq T.
\]
If, in addition, for all  $\omega \in \Omega, K(\omega)$ is a closed nonempty subset of $X$ and $K(\omega)$ is measurable in $\Omega$ with respect to $\mathcal{F},$ then we say $K$ is a closed measurable $\mathcal{D}$-pullback absorbing set for $\Phi$.
\end{defn}

\begin{defn}
 An RDS $\Phi$ is said to be $\mathcal{D}$-pullback asymptotically compact in $X$ if for $P$-a.e. $\omega \in \Omega$, $\left\{\Phi\left(t_{n}, \theta_{-t_{n}} \omega, x_{n}\right)\right\}_{n \geq 1}$ has a convergent subsequence in $X$ whenever $t_{n} \rightarrow \infty$ and $x_{n} \in D\left(\theta_{-t_{n}} \omega\right)$ for any given $D \in \mathcal{D}$.
\end{defn}

\begin{defn}\label{defn7}
A compact random set $\mathcal{A}(\omega)$ is said to be a $\mathcal{D}$-pullback random attractor associated to the RDS
$\Phi$ if it satisfies the invariance property
$$\Phi(t, \omega) \mathcal{A}(\omega)=\mathcal{A}\left(\theta_{t} \omega\right), \quad \text { for all } t \geq 0 $$
and the pullback attracting property
\[
\lim _{t \rightarrow \infty} \operatorname{dist}\left(\Phi\left(t, \theta_{-t} \omega\right)D\left(\theta_{-t} \omega\right), \mathcal{A}(\omega)\right)=0,
\]
 for all $ t \geq 0, D \in \mathcal{D}$ P-a.e. $\omega\in \Omega$.where  $\operatorname{dist} (\cdot, \cdot)$ denotes the Hausdorff semidistance
\[
\operatorname{dist}(A, B)=\sup _{x \in A} \inf _{y\in  B} \mathrm{d}(x, y), \quad A, B \subset  X.
\]
\end{defn}

\begin{lem}\label{lem1}
Let RDS $(\theta, \Phi)$ be a continuous RDS. Suppose that $\Phi$ is   pullback asymptotically compact and has a closed pullback $\mathcal{D}$-absorbing set $K=\{K(\omega)\}_{\omega \in \Omega} \in \mathcal{D}$. Then it possesses a random attractor $\{\mathcal{A}(\omega)\}_{\omega \in \Omega},$ where
$$
\mathcal{A}(\omega)=\cap_{\tau \geq 0} \overline{\cup_{t \geq \tau} \Phi\left(t, \theta_{-t} \omega, K\left(\theta_{-t} \omega\right)\right)}.
$$
\end{lem}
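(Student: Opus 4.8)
The plan is to carry out the classical pullback--attractor construction (as in \cite{9,10,FS}): show that the set $\mathcal{A}(\omega)$ given by the stated formula is a nonempty compact random set, verify the invariance $\Phi(t,\omega)\mathcal{A}(\omega)=\mathcal{A}(\theta_t\omega)$, and establish pullback attraction of every $D\in\mathcal{D}$. All cocycle manipulations rely on property (iii) of Definition \ref{defn2}, used in the ``pullback'' form $\Phi(t+s,\theta_{-(t+s)}\omega,\cdot)=\Phi(t,\theta_{-t}\omega,\cdot)\circ\Phi(s,\theta_{-(t+s)}\omega,\cdot)$. I would first record the characterisation: $y\in\mathcal{A}(\omega)$ if and only if there exist $t_n\to\infty$ and $x_n\in K(\theta_{-t_n}\omega)$ with $\Phi(t_n,\theta_{-t_n}\omega,x_n)\to y$ (the nontrivial direction only picks, for each $m\in\mathbb{N}$, a point of $\bigcup_{t\ge m}\Phi(t,\theta_{-t}\omega,K(\theta_{-t}\omega))$ within distance $1/m$ of $y$). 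Since $K\in\mathcal{D}$, pullback asymptotic compactness applies to such sequences, so $\mathcal{A}(\omega)$ is nonempty and precompact; being an intersection of closed sets it is closed, hence compact (using completeness of $X$). As $K$ absorbs itself, $\Phi(t,\theta_{-t}\omega,K(\theta_{-t}\omega))\subseteq K(\omega)$ for large $t$, whence $\mathcal{A}(\omega)\subseteq K(\omega)$.

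Measurability of $\omega\mapsto\mathcal{A}(\omega)$ I would obtain from the joint measurability of $\Phi$ and of $(t,\omega)\mapsto\theta_t\omega$, the continuity of $t\mapsto\Phi(t,\omega,x)$ and the separability of $X$: these let me replace each union over $t\ge\tau$ by a countable union over rationals, so that $\omega\mapsto\overline{\bigcup_{t\ge\tau}\Phi(t,\theta_{-t}\omega,K(\theta_{-t}\omega))}$ is a random closed set, and then $\mathcal{A}(\omega)=\bigcap_{m\in\mathbb{N}}\overline{\bigcup_{t\ge m}\Phi(t,\theta_{-t}\omega,K(\theta_{-t}\omega))}$ is a random set because a countable intersection of random closed sets in a Polish space is again one (standard measurable--selection facts).

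For the attraction I argue by contradiction. If $\mathcal{A}(\omega)$ fails to attract some $D\in\mathcal{D}$, there are $\varepsilon>0$, $t_n\to\infty$ and $x_n\in D(\theta_{-t_n}\omega)$ with $\operatorname{dist}\bigl(\Phi(t_n,\theta_{-t_n}\omega,x_n),\mathcal{A}(\omega)\bigr)\ge\varepsilon$; by pullback asymptotic compactness a subsequence satisfies $\Phi(t_n,\theta_{-t_n}\omega,x_n)\to y$, so it suffices to show $y\in\mathcal{A}(\omega)$. It is enough to check $y\in\overline{\bigcup_{t\ge m}\Phi(t,\theta_{-t}\omega,K(\theta_{-t}\omega))}$ for each $m\in\mathbb{N}$, since these sets are nested. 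Fix $m$; for $n$ large the cocycle identity gives $\Phi(t_n,\theta_{-t_n}\omega,x_n)=\Phi\bigl(m,\theta_{-m}\omega,\Phi(t_n-m,\theta_{-(t_n-m)}(\theta_{-m}\omega),x_n)\bigr)$, and because $K$ is $\mathcal{D}$-absorbing at the sample point $\theta_{-m}\omega$ while $x_n\in D(\theta_{-(t_n-m)}(\theta_{-m}\omega))$, the inner argument lies in $K(\theta_{-m}\omega)$ for $n$ large; hence $\Phi(t_n,\theta_{-t_n}\omega,x_n)\in\bigcup_{t\ge m}\Phi(t,\theta_{-t}\omega,K(\theta_{-t}\omega))$, and passing to the limit puts $y$ in the closure. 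Thus $y\in\mathcal{A}(\omega)$, a contradiction.

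Finally, invariance. For $\Phi(t,\omega)\mathcal{A}(\omega)\subseteq\mathcal{A}(\theta_t\omega)$: if $y=\lim\Phi(t_n,\theta_{-t_n}\omega,x_n)$ with $x_n\in K(\theta_{-t_n}\omega)$, then continuity of $\Phi(t,\omega,\cdot)$ and the cocycle identity give $\Phi(t,\omega,y)=\lim\Phi(t+t_n,\theta_{-(t+t_n)}(\theta_t\omega),x_n)$ with $x_n\in K(\theta_{-(t+t_n)}(\theta_t\omega))$ and $t+t_n\to\infty$, so $\Phi(t,\omega,y)\in\mathcal{A}(\theta_t\omega)$ by the characterisation. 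Conversely, if $z=\lim\Phi(s_n,\theta_{-s_n}(\theta_t\omega),x_n)$ with $x_n\in K(\theta_{-s_n}(\theta_t\omega))$ and $s_n\to\infty$, then for $n$ large $\theta_{-s_n}(\theta_t\omega)=\theta_{-(s_n-t)}\omega$ and $\Phi(s_n,\theta_{-s_n}(\theta_t\omega),x_n)=\Phi\bigl(t,\omega,\Phi(s_n-t,\theta_{-(s_n-t)}\omega,x_n)\bigr)$; pullback asymptotic compactness (with $K\in\mathcal{D}$) yields a subsequence along which $\Phi(s_n-t,\theta_{-(s_n-t)}\omega,x_n)\to\bar y\in\mathcal{A}(\omega)$, and continuity gives $z=\Phi(t,\omega,\bar y)\in\Phi(t,\omega)\mathcal{A}(\omega)$. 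This shows $\mathcal{A}(\omega)$ is the $\mathcal{D}$-pullback random attractor of Definition \ref{defn7}. The main obstacle I anticipate is not a single hard estimate but the bookkeeping of the noise shifts $\theta_{\pm\tau}$ forced by the cocycle property together with the fact that the absorbing and asymptotic--compactness hypotheses only hold $P$-a.e.: one should fix once and for all a $\theta$-invariant set of full measure on which all the (countably many) shifted instances used above are simultaneously valid, and be careful in the measurability step to reduce all closures and intersections to countable operations.
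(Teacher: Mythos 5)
The paper does not prove this lemma: it is quoted as a known result, with existence of the attractor deduced from the cited references \cite{9,10} (Crauel--Flandoli). Your argument is a correct reconstruction of exactly that classical proof — omega-limit characterisation of $\mathcal{A}(\omega)$, compactness and containment in $K(\omega)$ via self-absorption, attraction by contradiction using the cocycle split $\Phi(t_n,\theta_{-t_n}\omega,\cdot)=\Phi(m,\theta_{-m}\omega,\cdot)\circ\Phi(t_n-m,\theta_{-t_n}\omega,\cdot)$, and two-sided invariance — so there is nothing methodologically different to compare. The only point worth flagging is that your measurability step implicitly uses continuity of $t\mapsto\Phi(t,\omega,x)$ to pass to rational times, whereas Definition \ref{defn2} in the paper only requires continuity in $x$ plus joint measurability; this is the standard (and standardly glossed) hypothesis in the cited sources, and you already acknowledge it in your closing remarks.
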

F
The following Gr\"{o}nwall inequality that will be frequently used in our subsequent proofs.
\begin{lem}\label{lem2}
Let $T>0$ and $u, \alpha, f$ and $g$ be non-negative continuous functions defined on $[0, T]$ such that
\[
u(t) \leq \alpha(t)+f(t) \int_{0}^{t} g(r) u(r) d r, \quad \text { for } t \in[0, T].
\]
Then
\[
u(t) \leq \alpha(t)+f(t) \int_{0}^{t} g(r) \alpha(r) e^{\int_{r}^{t} f(\tau) g(\tau) d \tau} d r, \quad \text { for } t \in[0, T].
\]
\end{lem}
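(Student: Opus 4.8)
The plan is to reduce the integral inequality to a linear first-order differential inequality for the accumulated quantity $v(t):=\int_{0}^{t}g(r)u(r)\,dr$, and then to solve that inequality explicitly by an integrating factor. First I would observe that, since $g$ and $u$ are continuous on $[0,T]$, the function $v$ is of class $C^{1}$ with $v(0)=0$ and $v'(t)=g(t)u(t)$. Feeding the hypothesis $u(t)\le \alpha(t)+f(t)v(t)$ into this identity and using $g(t)\ge 0$ to preserve the direction of the inequality yields
\[
v'(t)\le g(t)\alpha(t)+f(t)g(t)v(t),\qquad t\in[0,T].
\]

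Next I would introduce the integrating factor $\mu(t):=\exp\!\left(-\int_{0}^{t}f(s)g(s)\,ds\right)>0$, which satisfies $\mu'(t)=-f(t)g(t)\mu(t)$. Multiplying the differential inequality by $\mu(t)$ turns its left-hand side into an exact derivative:
\[
\frac{d}{dt}\bigl(\mu(t)v(t)\bigr)=\mu(t)\bigl(v'(t)-f(t)g(t)v(t)\bigr)\le \mu(t)g(t)\alpha(t).
\]
Integrating from $0$ to $t$ and using $v(0)=0$, $\mu(0)=1$ gives $\mu(t)v(t)\le\int_{0}^{t}\mu(r)g(r)\alpha(r)\,dr$; dividing by $\mu(t)>0$ and noting that $\mu(r)/\mu(t)=\exp\!\left(\int_{r}^{t}f(\tau)g(\tau)\,d\tau\right)$ produces
\[
v(t)\le\int_{0}^{t}g(r)\alpha(r)\,e^{\int_{r}^{t}f(\tau)g(\tau)\,d\tau}\,dr.
\]
Substituting this bound for $v(t)$ back into $u(t)\le\alpha(t)+f(t)v(t)$ yields precisely the asserted estimate, which completes the argument.

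I do not expect a genuine obstacle here: the only points that need care are the differentiability of $v$ (which comes from the fundamental theorem of calculus together with continuity of $g$ and $u$) and the preservation of the inequality at each manipulation — this is exactly where the nonnegativity of $g$ (to pass from $u\le\alpha+fv$ to $gu\le g\alpha+fgv$) and the strict positivity of $\mu$ are needed. An alternative route would be to iterate the hypothesis and sum the resulting Neumann-type series, but the integrating-factor argument is shorter and delivers the stated closed form directly.
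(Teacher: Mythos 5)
Your argument is correct and complete: it is the standard integrating-factor proof of this generalized Gr\"{o}nwall inequality, and you correctly identify the two points that need care (differentiability of $v(t)=\int_0^t g(r)u(r)\,dr$ via the fundamental theorem of calculus, and the nonnegativity of $g$ together with positivity of $\mu$ to preserve the inequality directions). The paper states this lemma without proof, treating it as a known auxiliary result, so there is nothing to compare against; your write-up supplies exactly the expected justification.
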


\section{Existence of an RDS}\label{S3:SSG}
Throughout the remaining part of this paper, we always make the following assumptions on the nonlinear term $f$ and $g$.

$\mathbf{Hypothesis \  A1}$ The functions $f(x)$ and $g(x)$ are globally Lipschitz continuous with Lipschitz constants $L_f $ and $L_g$.

$\mathbf{Hypothesis \  A2}$ $f(x)$ and $g(x)$ are bounded with $\mathbf{0}$ being a fixed point, that is $f(\mathbf{0})=g(\mathbf{0})=0$ and there exists a $M>0$ such that $f(x)\leq M$ for any $x\in \mathbb{R}^n$ and $g(\phi)\leq M$ for any $\phi\in \mathcal{C}$.

By \cite{M1984} Theorem 2.1 and \cite{GL2022} Lemma 1, we have the following results concerning the existence of a continuous $\mathcal{F}_{t}$-measurable solution to \eqref{1.4}.
\begin{lem}\label{lem2.1}
 Assume that $\mathbf{Hypothesis \  A1-A2}$ hold, then for P-a.e. $\omega\in \Omega$, the stochastic delayed HNNM \eqref{1.4} admits a unique solution $u(\cdot,\omega,\phi) \in C\left([-\tau, \infty), \mathbb{R}^{n}\right)$, which is adapted to $\{\mathcal{F}_t\}_{t\in[0, \infty)}$ for any initial condition  $\phi\in \mathcal{C}$.
\end{lem}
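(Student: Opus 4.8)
The plan is to recognize \eqref{1.4} as a stochastic functional differential equation (SFDE) on the natural phase space $\mathcal{C}=C([-\tau,0],\mathbb{R}^{n})$ and to invoke the classical existence--uniqueness theory for such equations. Writing $u_{t}\in\mathcal{C}$ for the memory segment $u_{t}(s)=u(t+s)$, $s\in[-\tau,0]$, and $\psi_{\tau}:=(\psi_{1}(-\tau_{1}),\ldots,\psi_{n}(-\tau_{n}))^{\mathrm T}$ for $\psi\in\mathcal{C}$, equation \eqref{1.4} is of the form
\[
du(t)=F(u_{t})\,dt+\Sigma\, u(t)\diamond dw(t),\qquad u_{0}=\phi,
\]
with drift functional $F:\mathcal{C}\to\mathbb{R}^{n}$, $F(\psi)=-C\psi(0)+Hf(\psi(0))+Bg(\psi_{\tau})$, and linear diffusion coefficient $\psi\mapsto\Sigma\psi(0)$ entering through the $\diamond$-product with $dw$. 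Note that here $\phi$ is a \emph{deterministic} element of $\mathcal{C}$, hence trivially $\mathcal{F}_{0}$-measurable and square integrable, so no condition on a random initial segment is needed.

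First I would verify the two structural hypotheses required by \cite{M1984}, Theorem~2.1 (and by \cite{GL2022}, Lemma~1), namely that the coefficients are Lipschitz on bounded subsets of $\mathcal{C}$ and obey a linear growth bound. The point evaluations $\psi\mapsto\psi(0)$ and $\psi\mapsto\psi_{\tau}$ are bounded linear operators from $\mathcal{C}$ into $\mathbb{R}^{n}$, so by \textbf{Hypothesis A1} each of the three summands of $F$ is globally Lipschitz, with constants controlled by $\|C\|$, $\|H\|L_{f}$ and $\|B\|L_{g}$ together with these (finite) operator norms; summing gives a global Lipschitz constant $L_{F}$ for $F$, and the diffusion $\psi\mapsto\Sigma\psi(0)$ is linear, hence globally Lipschitz with constant $\|\Sigma\|$. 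For the growth bounds, \textbf{Hypothesis A2} yields $|Hf(\psi(0))|+|Bg(\psi_{\tau})|\le(\|H\|+\|B\|)M=:K$, whence $|F(\psi)|\le\|C\|\,\|\psi\|+K$ and $\|\Sigma\psi(0)\|\le\|\Sigma\|\,\|\psi\|$; both are at most linear in $\|\psi\|$. (Because of the multiplicative noise the diffusion is only linearly, not boundedly, bounded, so one genuinely needs the linear-growth version of the theorem, not a naive global contraction with bounded coefficients.)

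With these hypotheses verified, \cite{M1984}, Theorem~2.1, supplies, on every finite horizon $[0,T]$, a solution process $\{u(t):-\tau\le t\le T\}$ that is sample-path continuous, $\{\mathcal{F}_{t}\}$-adapted, and unique up to indistinguishability; letting $T\to\infty$ and gluing the restrictions by uniqueness yields a single process on $[0,\infty)$, and prepending the prescribed segment $\phi$ on $[-\tau,0]$ gives $u(\cdot,\omega,\phi)\in C([-\tau,\infty),\mathbb{R}^{n})$ for $P$-a.e.\ $\omega$, the exceptional null set being the union over $m\in\mathbb{N}$ of the null sets outside which sample continuity and the It\^o-integral identities fail on $[0,m]$. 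An alternative, equally clean, route is the \emph{method of steps}: on $[0,\delta]$ with $\delta=\min_{j}\tau_{j}$ the delayed term $g_{j}(u_{j}(t-\tau_{j}))$ equals the \emph{known} continuous function $g_{j}(\phi_{j}(t-\tau_{j}))$, so \eqref{1.4} reduces to a standard It\^o SDE with globally Lipschitz, linearly growing coefficients, solvable by the classical theorem; iterating over the successive intervals $[(k-1)\delta,k\delta]$ produces the global solution.

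I do not expect a serious obstacle here: the lemma is essentially a verification that the known SFDE (equivalently, the step-by-step SDE) machinery applies. The only points that deserve care are, first, that the \emph{component-wise} delays $\tau_{j}$ --- rather than a single scalar delay --- still fit the bounded-memory framework once one observes that the delayed-evaluation functional $\psi\mapsto\psi_{\tau}$ is a bounded linear operator on $\mathcal{C}$; and second, that the $P$-a.s.\ qualifier in the statement is precisely the exceptional null set inherited from the stochastic integrals and the sample-path regularity, outside of which existence, uniqueness, continuity and adaptedness all hold simultaneously.
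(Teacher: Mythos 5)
Your proposal is correct and takes essentially the same route as the paper: the paper proves Lemma \ref{lem2.1} simply by citing the classical existence--uniqueness theorem for stochastic functional differential equations (\cite{M1984}, Theorem 2.1, together with \cite{GL2022}, Lemma 1), which is exactly the theorem you invoke after verifying its Lipschitz and linear-growth hypotheses. Your writeup merely supplies the (routine) verification details and an alternative method-of-steps argument that the paper omits.
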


In the following, we are devoted to showing the stochastic delayed HNNM \eqref{1.4} generates a RDS.  For a measurable process $u:[-\tau, \infty)\times\Omega \rightarrow \mathbb{R}^{n}$, the segment $u_{t}: [-\tau,0] \times \Omega \rightarrow \mathbb{R}^{n}, t \in \mathbb{R}^{+}$, is defined by
$$
u_{t}(s):=u(t+s), \quad \quad s \in [-\tau,0].
$$
 Consider the following stochastic differential equation obtained by the diffusion term of  \eqref{1.4}
 \begin{equation}\label{2.1}
\left\{\begin{array}{l}
d v(t)=\Sigma v(t)\!\diamond \!d w(t),\\
v(0)=\mathrm{id}_{\mathbb{R}^{n}},
\end{array}\right.
\end{equation}
where  $\mathrm{id}_{\mathbb{R}^{n}}$ be the identity operator on $\mathbb{R}^{n}$. We assume here and elsewhere that the  components $w_i(t)$ of the vector $w(t)$ are mutually independent standard Wiener processes.

 Let $v(t, \omega, \cdot): \mathbb{R}^{n} \rightarrow \mathbb{R}^{n}$ be the solution of \eqref{2.1} and it is well known from \cite{AL} that \eqref{2.1} admits a linear continuous solution which is bounded for P-a.e. $\omega \in \Omega$. Denote  its bound by $L_v$, i.e. $\|v(t, \omega, \cdot)\|\leq L_v$ for all $t\in \mathbb{R}^n$ and P-a.e. Moreover, it process a bounded continuous  inverse $v^{-1}(t, \omega, \cdot)$.
% $\omega \in \Omega$. Let $\left\{e_{i}\right\}_{1=1}^{m}$ be a canonical basis of $\mathbb{R}^{m}$, $w_i$ be mutually independent standard Wiener process defined in Section 1, $\mathrm{id}_{\mathbb{R}^{n}}$ be the identity operator on $\mathbb{R}^{n}$, then \eqref{2.1} is equivalent to
%\begin{equation}\label{3.96a}
%\left\{\begin{array}{l}
%d v(t,\cdot,\cdot)=\sum_{1=1}^{m}[\Sigma\circ v(t,\cdot,\cdot)](\cdot)\left(e_{i}\right)d w_i(t),\\
%\hat{v}(0, \cdot, \cdot)=\mathrm{id}_{\mathbb{R}^{n}}.
%\end{array}\right.
%\end{equation}
Hence, the solution $v(t,\omega,\cdot): \mathbb{R}\times \Omega\rightarrow L(\mathbb{R}^{n})$ is $\{\mathcal{F}_t\}_{t\geq0}$-adapted.

Define the following operator
\begin{equation}\label{2.2}
\begin{aligned}
A(t,\omega, \phi)\triangleq &v^{-1}(t, \omega, \cdot)\circ  [-Cv(t,\omega,\phi(0))+H f(v(t,\omega,\phi(0))) \\
&+Bg\left(v_t(\cdot,\omega,\phi)\right)]
\end{aligned}
\end{equation}
with $v_{t}(\cdot, \omega, \cdot): \mathcal{C}\rightarrow \mathcal{C}$ being defined by
\begin{equation}\label{2.3}
\begin{aligned}
v_{t}(s, \omega, \phi)&=\left\{\begin{array}{ll}
v(t+s, \omega, \phi(0)), & t+s\geq 0, \\
\phi(t+s), & -\tau \leq t+s<0.
\end{array}\right.\\
\end{aligned}
\end{equation}
Assume that $g(\varphi)=L\varphi+\tilde{g}(\varphi)$ for any $\varphi \in \mathcal{C}$, where $L$ is the linear part of $g$. (Hence, the Lipschitz constant of $\tilde{g}$ denoted by $L_{\tilde{g}}$ is $L_{g}-\|L\|$). Consider the linear random functional differential equation
\begin{equation}\label{2.7}
\left\{\begin{array}{l}
\frac{d \bar{u}(t)}{d t}=-C\bar{u}(t)+v^{-1}(t, \omega,\cdot)\circ BLv_t(\cdot,\omega,\bar{u}_t)\triangleq \tilde{L}(t,\omega)\bar{u}_t, \\
 \bar{u}_{0}=\phi \in \mathcal{C}
\end{array}\right.
\end{equation}
for any $t>0$.

For the pathwise deterministic functional differential equation \eqref{2.7}, it follows from \cite{JH} that the fundamental solution \eqref{2.7}, denoted by $S^\phi(\cdot,\omega):[0,\infty)\rightarrow  \mathbb{R}^{n^2}$  is defined by
\begin{equation}\label{2.5b}
\begin{aligned}
S^{\phi}(t,\omega) & =(S * \phi)(t,\omega)
  \triangleq S(t,\omega) \phi(0)\\
&+\int_{-\tau}^{0} \int_{\xi}^{0} S(t+\xi-s,\omega) \mathrm{d} \eta(\xi,t,\omega) \phi(s) \mathrm{d} s.
\end{aligned}
\end{equation}
for any $t \geq 0$ and P-a.e. $\omega\in \Omega$. Here, $\eta:[-\tau, 0]\times[0, \infty)\times\Omega \rightarrow \mathbb{R}^{n^{2}}$ is  an $n \times n$ matrix-valued function whose elements are of bounded variation such that
$$
\tilde{L}(t,\xi) \phi=\int_{-\tau}^{0} \mathrm{~d} \eta(\xi,t,\omega) \phi(\xi)
$$
for P-a.e. $\omega\in \Omega$ due to  the Riez representation theorem.

For any fixed $\omega\in \Omega$ and $t>0$, the characteristic equation of \eqref{2.7} is given by
\begin{equation}\label{2.8}
\begin{array}{l}
\operatorname{det}\left[\lambda \operatorname{Id}-\tilde{L}(t,\omega)\left(\mathrm{e}^{-\lambda(\cdot)}\right)\right]=0,
\end{array}
\end{equation}
of which  the spectrum is defined as
$$
\varrho=\max \left\{\operatorname{Re} \lambda: \operatorname{det}\left[\lambda \operatorname{Id}-\tilde{L}(t,\omega)\left(\mathrm{e}^{-\lambda(\cdot)}\right)\right]=0\right\}.
$$
In the reminder of this paper, we always assume that $\varrho<0$ and hence we have the following results by \cite{JH}.
 \begin{lem}\label{lem4}If $\varrho<0$, then there exist positive constants $\gamma$ and $K_{0}$ such that
\begin{equation}\label{2.8a}|(S* \phi)(t,\omega)|<K_{0} \mathrm{e}^{-\gamma t}\|\phi\|\end{equation}
 for all $t \geq 0$, P-a.e. $\omega\in \Omega$ and  any $\phi \in \mathcal{C}$. In particular, there exists a positive constant $K_{1}$ such that
\begin{equation}\label{2.8b}
|S(t,\omega)| \leq K_{1} \mathrm{e}^{\varrho t / 2} \text { for all } t \geq 0, \text { P-a.e.  } \omega\in \Omega.
\end{equation}
\end{lem}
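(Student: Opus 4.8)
The plan is to reduce the claimed exponential bounds \eqref{2.8a}–\eqref{2.8b} to the standard theory of linear autonomous (in $t$, for each fixed $\omega$) functional differential equations, exactly as in \cite{JH}, while keeping careful track of the $\omega$-dependence. First I would observe that for each fixed $\omega\in\Omega$ and each fixed parameter value $t>0$ (which plays the role of the "freezing" parameter in the non-autonomous coefficient $\tilde{L}(t,\omega)$), equation \eqref{2.7} with its coefficient frozen is a linear autonomous RFDE on $\mathcal{C}$ whose generator has spectrum lying in $\{\operatorname{Re}\lambda\le\varrho\}$ by \eqref{2.8} and the definition of $\varrho$. The classical result (Hale–Verduyn Lunel, i.e. \cite{JH}) then gives: for any $\varepsilon>0$ there is $K(\varepsilon,\omega)$ with $|S(t,\omega)|\le K(\varepsilon,\omega)e^{(\varrho+\varepsilon)t}$; choosing $\varepsilon=-\varrho/2>0$ yields \eqref{2.8b} with $K_1=K(-\varrho/2,\omega)$. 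To get the convolution estimate \eqref{2.8a}, I would substitute \eqref{2.8b} into the variation-of-constants representation \eqref{2.5b}: bound the first term by $K_1 e^{\varrho t/2}\|\phi\|$, and for the double integral use that $\eta(\cdot,t,\omega)$ has bounded variation, say with total variation $\operatorname{Var}(\eta)\le V(\omega)<\infty$ uniformly in $t$ because the coefficients $C$, $B$, $L$ are constant and $\|v^{-1}(t,\omega,\cdot)\|$ is bounded by a finite constant for P-a.e. $\omega$ (this is where the boundedness of $v$ and $v^{-1}$ asserted just before \eqref{2.2} enters). A short computation estimating $\int_{-\tau}^{0}\!\int_{\xi}^{0}|S(t+\xi-s,\omega)|\,|d\eta|\,|\phi(s)|\,ds$ by $K_1 e^{\varrho t/2}e^{-\varrho\tau/2}\,\tau\, V(\omega)\,\|\phi\|$ then gives \eqref{2.8a} with $\gamma=-\varrho/2$ and $K_0=K_0(\omega)$ absorbing all the constants.

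The second, more delicate point is that the constants $K_0,\gamma,K_1$ are asserted to be \emph{deterministic} ("positive constants"), whereas the naive argument above produces $\omega$-dependent ones. Here I would use that $v(t,\omega,\cdot)$ and $v^{-1}(t,\omega,\cdot)$ are bounded by the \emph{uniform} constant $L_v$ (and its counterpart for the inverse), so the coefficient norm $\|\tilde{L}(t,\omega)\|$ and the total variation $V(\omega)$ are bounded by a constant depending only on $\|C\|,\|B\|,\|L\|,L_v$ and $\tau$ — not on $\omega$. Since $\varrho$ is a spectral quantity of $\tilde{L}(t,\omega)(e^{-\lambda(\cdot)})$, I would either invoke that under these hypotheses $\varrho$ is in fact $\omega$-independent (the linear part $L$ and the matrices $C,B$ are deterministic and the $v^{-1}\circ(\cdot)\circ v$ conjugation does not move the spectrum of the relevant operator), or, more conservatively, replace $\varrho$ by a deterministic upper bound $\bar\varrho<0$ valid for P-a.e. $\omega$ and run the estimate with $\bar\varrho$. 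The exponential-dichotomy constant in the autonomous estimate $|S(t,\omega)|\le K e^{(\varrho+\varepsilon)t}$ can then be taken uniform because it depends only on the location of the spectrum and on the resolvent bounds on a fixed vertical line, both controlled by the uniform coefficient bound.

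The main obstacle I anticipate is precisely this uniformity-in-$\omega$ issue together with the measurability of $S(t,\omega)$ and $\eta(\xi,t,\omega)$: one must check that the fundamental solution depends measurably on $\omega$ (so that later \eqref{2.8a} can be used inside pathwise a priori estimates and absorbing-set arguments), which follows from measurability of $\omega\mapsto v^{-1}(t,\omega,\cdot)$ and continuous dependence of solutions of linear RFDEs on their coefficients, but requires a citation to the relevant continuous/measurable dependence result. A secondary technical nuisance is that $\tilde{L}(t,\omega)$ genuinely depends on $t$, so \eqref{2.7} is non-autonomous; however, because only the coefficient matrix (through $v^{-1}(t,\omega,\cdot)$) varies and its norm stays bounded, the estimates \eqref{2.8a}–\eqref{2.8b} should be read as holding with constants uniform over the frozen-coefficient family, which is exactly what \cite{JH} provides and what subsequent sections need. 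Everything else — the variation-of-constants bookkeeping and the Grönwall-type absorption of constants — is routine.
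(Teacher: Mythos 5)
The paper offers no proof of this lemma at all: it simply asserts the two estimates ``by \cite{JH}'' after assuming $\varrho<0$. Your proposal is therefore a reconstruction of the intended argument rather than a deviation from it, and the route you take --- invoke the Hale--Verduyn Lunel exponential estimate $|S(t)|\le K(\varepsilon)e^{(\varrho+\varepsilon)t}$ for the linear RFDE, choose $\varepsilon=-\varrho/2$, then feed this into the representation \eqref{2.5b} using the bounded-variation bound on $\eta$ and the uniform bounds on $v$, $v^{-1}$ to get \eqref{2.8a} with $\gamma=-\varrho/2$ --- is exactly what the citation to \cite{JH} is meant to cover. Your attention to the $\omega$-uniformity of the constants and to the measurability of $S(t,\omega)$ is more careful than the paper, which says nothing about either point, and your observation that the constants can be controlled through $L_v$ and the deterministic matrices $C,B,L$ is the right way to discharge it.

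There is, however, one step you flag but do not actually close, and it is the only genuinely nontrivial point: \eqref{2.7} is a \emph{non-autonomous} linear RFDE, since $\tilde L(t,\omega)$ depends on $t$ through $v^{-1}(t,\omega,\cdot)\circ BL\,v_t(\cdot,\omega,\cdot)$. The characteristic equation \eqref{2.8} and the quantity $\varrho$ are defined by freezing $t$, and negativity of the frozen spectrum at every time, together with boundedness of the coefficient, does \emph{not} in general imply exponential decay of the fundamental solution of a time-varying linear system (this already fails for ODEs unless the coefficient varies slowly or one has additional structure). What \cite{JH} actually provides is the estimate for \emph{autonomous} equations. So the sentence ``the estimates should be read as holding with constants uniform over the frozen-coefficient family, which is exactly what \cite{JH} provides'' is not a proof of \eqref{2.8a}--\eqref{2.8b} for the equation as written; one would need either to show that the conjugation by $v$ leaves the relevant semigroup estimate intact (e.g.\ by working with $v(t,\omega,\bar u(t))$ directly, for which the delayed linear part becomes autonomous), or to impose a slow-variation/roughness hypothesis. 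To be fair, the paper commits exactly the same elision, so your proposal matches the paper's (absent) argument while being more explicit about where the real work lies.
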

We next show the global existence and properties of solution to a random functional differential equation generated by the operator \eqref{2.2}.
\begin{lem}\label{lem5}
Assume that $\mathbf{Hypothesis \  A1-\ A2}$ hold and $\varrho<0$, then the following random retarded  functional differential equation
\begin{equation}\label{2.4}
\left\{\begin{array}{l}
\frac{d \tilde{u}(t)}{d t}=A(t, \omega, \tilde{u}_t), \quad t>0, \\
 \tilde{u}_{0}=\phi \in \mathcal{C}.
\end{array}\right.
\end{equation}
admits a global continuous mild solution $\tilde{u}(\cdot, \omega, \phi):[-\tau, \infty)\rightarrow \mathbb{R}^{n}$ which is $C^1$ in $t$ for any $\phi\in \mathcal{C}$ and P-a.e. $\omega\in \Omega$. Moreover, $\tilde{u}(t, \omega, \phi)$ is jointly continuous in $(t, \omega)$, $\tilde{u}(t, \cdot, \phi)$ is $\mathcal{F}_{t}$-measurable for any $\phi\in \mathcal{C}$ and the map $\tilde{u}_{t}(\cdot, \omega, \cdot): \mathcal{C }\rightarrow \mathcal{C}$ is compact for any $t>\tau$.
\end{lem}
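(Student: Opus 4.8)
The plan is to read \eqref{2.4} as a random retarded functional differential equation whose right-hand side $A(t,\omega,\cdot)$ is, for a.e.\ fixed $\omega$, globally Lipschitz on $\mathcal{C}$, of at most linear growth, and $\mathcal{F}_t$-adapted; to produce a unique global solution by successive approximation; and to read off the compactness for $t>\tau$ from an equicontinuity estimate on the solution segment. Throughout, $\omega$ is taken in the full-measure set on which $v(\cdot,\omega,\cdot)$ and $v^{-1}(\cdot,\omega,\cdot)$ are continuous and bounded, say by $L_v$ and some $L_{v^{-1}}$.

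\textbf{Step 1 (global well-posedness and $C^1$-regularity).} First I would record, directly from \eqref{2.2}--\eqref{2.3}, that under $\mathbf{Hypothesis\ A1}$ together with $f(\mathbf{0})=g(\mathbf{0})=\mathbf{0}$ and the bounds on $v,v^{-1}$, the map $\phi\mapsto A(t,\omega,\phi)$ is globally Lipschitz on $\mathcal{C}$ with a constant bounded on compact $t$-intervals, that $t\mapsto A(t,\omega,\phi)$ is continuous, and that $\mathbf{Hypothesis\ A2}$ yields the affine bound $|A(t,\omega,\phi)|\le L_{v^{-1}}\big(\|C\|L_v\|\phi\|+(\|H\|+\|B\|)M\big)$. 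The classical theory of functional differential equations (as in \cite{JH}) then gives, for each $\phi\in\mathcal{C}$, a unique local continuous solution of \eqref{2.4} in the mild sense $\tilde u(t)=\phi(0)+\int_0^t A(s,\omega,\tilde u_s)\,ds$, $\tilde u_0=\phi$; since $s\mapsto A(s,\omega,\tilde u_s)$ is continuous, $\tilde u$ is automatically $C^1$ on $(0,\infty)$ (this is the regularity gained by the transformation, unavailable for the It\^{o} process solving \eqref{1.4}). Putting $m(t)=\sup_{-\tau\le r\le t}|\tilde u(r)|$, the affine bound gives $m(t)\le\|\phi\|+L_{v^{-1}}(\|H\|+\|B\|)Mt+L_{v^{-1}}\|C\|L_v\int_0^t m(s)\,ds$, so Lemma \ref{lem2} supplies a finite bound $R(t,\|\phi\|,\omega)$ on every finite interval and the solution is global.

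\textbf{Step 2 (measurability and joint continuity).} Next I would run the Picard iteration $\tilde u^{(0)}\equiv\phi$ on $[-\tau,0]$ and $\equiv\phi(0)$ on $[0,\infty)$, $\tilde u^{(k+1)}(t)=\phi(0)+\int_0^t A(s,\omega,\tilde u^{(k)}_s)\,ds$. Because $v(s,\cdot)$ and $v^{-1}(s,\cdot)$ are $\mathcal{F}_s$-adapted and $f,g$ are continuous, each iterate $\tilde u^{(k)}(t,\cdot)$ is $\mathcal{F}_t$-measurable and jointly continuous in $(t,\omega)$, and the global Lipschitz bound forces uniform-on-compacts convergence to $\tilde u$; hence $\tilde u(t,\cdot,\phi)$ is $\mathcal{F}_t$-measurable and $(t,\omega)\mapsto\tilde u(t,\omega,\phi)$ is continuous, the $\omega$-part also invoking, as for Lemma \ref{lem2.1}, the joint continuity of the flow $v$ of \eqref{2.1} from \cite{AL} and a Gr\"{o}nwall continuous-dependence estimate. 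The same Lipschitz bound with Lemma \ref{lem2} gives $\|\tilde u_t(\cdot,\omega,\phi_1)-\tilde u_t(\cdot,\omega,\phi_2)\|\le C(t,\omega)\|\phi_1-\phi_2\|$, i.e.\ continuity of $\tilde u_t(\cdot,\omega,\cdot):\mathcal{C}\to\mathcal{C}$.

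\textbf{Step 3 (compactness for $t>\tau$), the main obstacle.} Fix $t>\tau$ and a bounded $B\subset\mathcal{C}$, say $\|\phi\|\le\rho$ on $B$. By Step 1, $\sup_{\phi\in B}\sup_{0\le r\le t}\|\tilde u_r(\cdot,\omega,\phi)\|\le R(t,\rho,\omega)$. Since $t-\tau>0$, on the \emph{whole} interval $[t-\tau,t]$ the solution is $C^1$ with $\tilde u'(r)=A(r,\omega,\tilde u_r)$, so the affine bound yields a constant $\Lambda(t,\rho,\omega)$ with $|\tilde u'(r)|\le\Lambda(t,\rho,\omega)$ for all $r\in[t-\tau,t]$ and all $\phi\in B$. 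Hence $\{\tilde u_t(\cdot,\omega,\phi):\phi\in B\}$ is bounded in $\mathcal{C}=C([-\tau,0],\mathbb{R}^n)$ and uniformly Lipschitz with constant $\Lambda(t,\rho,\omega)$, so it is relatively compact by Arzel\`a--Ascoli; with the continuity from Step 2 this shows $\tilde u_t(\cdot,\omega,\cdot)$ is a compact map for every $t>\tau$. I expect this step to be the delicate one. The condition $t>\tau$ is used precisely so that $[t-\tau,t]\subset(0,\infty)$, where $\tilde u$ is differentiable with a derivative bound uniform over bounded initial data; for $t\le\tau$ the segment $\tilde u_t$ still carries the merely continuous datum $\phi$ on $\{s:t+s<0\}$, and bounded subsets of $C([-\tau,0],\mathbb{R}^n)$ need not be relatively compact. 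The one genuinely careful point is to make the Lipschitz, growth and adaptedness properties of $A$ uniform in $t$ over compacts and in $\phi$ over bounded sets; the rest is standard functional-differential-equation theory together with Arzel\`a--Ascoli.
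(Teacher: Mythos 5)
Your proof is correct, and the measurability step (successive approximations) and the compactness step (a uniform bound on $\tilde u'=A(\cdot,\omega,\tilde u_\cdot)$ over $[t-\tau,t]$ for $t>\tau$, followed by Arzel\`a--Ascoli) are essentially the same as the paper's; indeed you are more explicit than the paper about why $t>\tau$ is exactly the threshold at which the segment no longer carries the merely continuous initial datum, and you correctly treat an arbitrary bounded set rather than only the unit ball, which is what compactness of a nonlinear map actually requires. Where you genuinely diverge is in the global-existence estimate. The paper splits $g=L\varphi+\tilde g(\varphi)$, represents the solution by the variation-of-constants formula \eqref{2.6a} through the fundamental solution $S$ of the linear RFDE \eqref{2.7}, and invokes the spectral condition $\varrho<0$ via Lemma \ref{lem4} to get the exponential bounds leading to \eqref{48}; you instead integrate $\tilde u(t)=\phi(0)+\int_0^t A(s,\omega,\tilde u_s)\,ds$ directly and use the affine bound coming from $\mathbf{Hypothesis\ A2}$ plus Gr\"onwall. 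Your route is more elementary and shows that, for this lemma alone, the hypothesis $\varrho<0$ is not actually needed (an a priori bound growing exponentially on finite intervals already rules out finite-time blow-up). What the paper's heavier machinery buys is the quantitative decaying estimate \eqref{47}--\eqref{48}, which is reused verbatim in Lemma \ref{lem6} to construct the tempered absorbing set; your bound $R(t,\|\phi\|,\omega)$ would not serve that later purpose. So your argument is a valid and cleaner proof of the lemma as stated, at the cost of having to redo the sharper estimate later.
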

\begin{proof}
Since $v(t, \omega, \cdot): \mathbb{R}^{n} \rightarrow \mathbb{R}^{n}$ and $v^{-1}(t, \omega, \cdot): \mathbb{R}^{n} \rightarrow \mathbb{R}^{n}$ are both continuous, then \eqref{2.4} can be written as
\begin{equation}\label{2.5}
\left\{\begin{array}{l}
\frac{d \tilde{u}(t)}{d t}=-C\tilde{u}(t)+v^{-1}(t, \omega, \cdot)\circ H f(v(t,\omega,\tilde{u}(t)))\\
+v^{-1}(t, \omega,\cdot)\circ Bg\left(v_t(\cdot,\omega,\tilde{u}_t)\right), \quad t>0, \\
 \tilde{u}_{0}=\phi \in \mathcal{C}.
\end{array}\right.
\end{equation}
For each $t \in \mathbb{R}^{+}$ and P-a.e. $\omega\in \Omega$. Moreover,  we have
\begin{equation}\label{2.5c}
\begin{aligned}
\|A(t, \omega, \tilde{u}_t)\|&\leq c|\tilde{u}(0)|+hL_f|\tilde{u}(0)|+bL_g\|\tilde{u}_t\|\\
& \leq(c+hL_f+bL_g)\|\tilde{u}_t\|,
\end{aligned}
\end{equation}
where $c,h,b$ represent  the norm of matrix $C,H$ and $B$ respectively. Hence,  the random retarded functional differential equation \eqref{2.4} admits a unique local solution with explosion time $\rho=\rho(\phi, \omega)$. We next show  that $\rho=\infty$.
By the variation-of-constants formula, we  see \eqref{2.4} is equivalent to the following integral equation
\begin{equation}\label{2.6a}
\left\{\begin{aligned}
\tilde{u}(t,\omega,\phi) &=S\ast\phi(t,\omega)+\int_{0}^{t} S(t-s,\omega)v^{-1}(s, \omega, \cdot)\circ
\\& [H f(v(s,\omega,\tilde{u}(s)))+ B\tilde{g}\left(v_s(\cdot,\omega,\tilde{u}_t)\right)] \mathrm{d} s, t \geq 0, \\
\tilde{u}(t) &=\phi(t), \quad t \in[-\tau, 0],
\end{aligned}\right.
\end{equation}
where $S\ast\phi(t,\omega)$ and $S(t,\omega)$ satisfy \eqref{2.8a} and \eqref{2.8b} respectively for $t \geq 0$ and P-a.e. $\omega\in \Omega$. Hence, we have
\begin{equation}\label{44}
\begin{aligned}
\left|\tilde{u}(t,\omega,\phi) \right|\leq &\left|\left(S* \phi\right)(t, \omega)\right|+\int_{0}^{t}|S(t-s)v^{-1}(s, \omega, \cdot)\circ \\
& [H f(v(s,\omega,\tilde{u}(s)))+ B\tilde{g}\left(v_s(\cdot,\omega,\tilde{u}_s)\right)] \mathrm{d} s| \mathrm{d} s \\
\leq &\left|\left(S* \phi\right)(t, \omega)\right|+\int_{0}^{t}S(t-s)[hL_f\|\tilde{u}(s, \omega,\phi)\|\\
&+bL_{\tilde{g}}\|\tilde{u}_s(\cdot, \omega,\phi)\|]\mathrm{d} s \\
\leq & K_0\mathrm{e}^{-\gamma t}\left\|\phi\right\|\\
&+ (hL_f+bL_{\tilde{g}})\int_{0}^{t} \mathrm{e}^{\varrho(t-s) / 2}\left\|\tilde{u}_s\left(\cdot, \omega, \phi\right)\right\| \mathrm{d} s, \\
\end{aligned}
\end{equation}
from which we can see for $t \geq \tau, \xi \in[-\tau, 0]$,
\begin{equation}\label{45}
\begin{aligned}
\|\tilde{u}(t+\xi, \omega,\phi) \|
\leq & K_0\mathrm{e}^{\gamma(\tau-t)}\left\|\phi\right\|+ (hL_f+bL_{\tilde{g}})\mathrm{e}^{-\varrho\tau / 2}\times\\
&\int_{0}^{t} \mathrm{e}^{\varrho / 2(t-s)}\left\|\tilde{u}_s\left(\cdot, \omega, \phi\right)\right\| \mathrm{d} s. \\
\end{aligned}
\end{equation}
Namely,
\begin{equation}\label{46}
\begin{aligned}
\mathrm{e}^{-\varrho t / 2}\left\|\tilde{u}_{t}\left(\cdot, \omega, \phi\right)\right\| \leq & c_{0} \mathrm{e}^{-\gamma t} \mathrm{e}^{-\varrho t / 2}\left\|\phi\right\| \\
&+c_{1} \int_{0}^{t} \mathrm{e}^{-\varrho s / 2}\left\|\tilde{u}_{s}\left(\cdot, \omega, \phi\right)\right\| \mathrm{d} s
\end{aligned}
\end{equation}
where $c_{0}=K_0\mathrm{e}^{\gamma \tau}$ and $ c_{1}= (hL_f+bL_{\tilde{g}})\mathrm{e}^{-\varrho\tau / 2}$.
Applying Gr\"{o}nwall's inequality yields
\begin{equation}\label{47}
\begin{aligned}
\mathrm{e}^{-\varrho t / 2}\left\|\tilde{u}_{t}\left(\cdot, \omega, \phi\right)\right\| \leq & c_{0}\|\phi\|[ \mathrm{e}^{-(\gamma+\varrho / 2)t}\\
&+c_{1}\mathrm{e}^{c_{1}t}  \int_{0}^{t} \mathrm{e}^{-(\gamma+\varrho / 2+c_1)s}\mathrm{d} s],
\end{aligned}
\end{equation}
and hence
\begin{equation}\label{48}
\begin{aligned}
\left\|\tilde{u}_{t}\left(\cdot, \omega, \phi\right)\right\| \leq & c_{0}\|\phi\|[ \mathrm{e}^{-\gamma t}+c_{1}\mathrm{e}^{(c_{1}+\varrho / 2)t}  \int_{0}^{t} \mathrm{e}^{-(\gamma+\varrho / 2+c_1)s}\mathrm{d} s]
\end{aligned}
\end{equation}
for P-a.e. $\omega \in \Omega$ and $0<t<\rho$. Thus, $\{\|\tilde{u}(t,\omega,\phi)\|: 0<t<\rho\}$ is bounded and so $\tilde{u}(\cdot,\omega,\phi): [0,\rho)\rightarrow H$ is uniformly continuous for P-a.e. $\omega \in \Omega$ and any $\phi\in \mathcal{C}$. Taking the limit
$\lim _{t \rightarrow \rho-}\tilde{u}_t(\cdot, \omega,\phi) \in \mathcal{C}$ as initial data for \eqref{2.4}, then by local existence we may extend $\tilde{u}$   to a larger interval than $[0, \rho)$. This contradicts the maximality of $\rho.$ Hence $\rho=\infty .$

It follows from the continuity of $A(t, \omega, \tilde{u}_t)$ with respect to $t$ that
$\tilde{u}(\cdot, \omega, \phi)$  is $C^1$ in $t$ for any $\phi\in \mathcal{C}$ and P-a.e. $\omega\in \Omega$. For each $t\in \mathbb{R}^{+}$ and $\phi\in \mathcal{C}$, $A(t, \omega, \tilde{u}_t)$ is $\mathcal{F}_{t}$-measurable. Hence,  a standard successive approximation argument shows that $\tilde{u}_t(\cdot, \omega,\phi)$ is jointly measurable in  $(t, \omega)$ and for each $t \in \mathbb{R}^{+}$, $\tilde{u}_t(\cdot, \omega,\phi)$ is $\mathcal{F}_{t}$-measurable for any $\phi\in \mathcal{C}$.

Furthermore if we fixed $ t_{0} \geq\rho$ then the above argument also shows that for P-a.e. $\omega \in \Omega$ the  set $\left\{d\tilde{u}(t, \omega, \phi) \mid: 0 \leq t \leq t_{0},\|\phi\| \leq 1\right\}$ is bounded.
Hence by Ascoli's theorem the family
\begin{equation}\label{3.9}
\left\{\tilde{u}_{t}(\cdot, \omega, \phi):\| \phi\| \leq 1\right\}
\end{equation}
is relatively compact in $\mathcal{C}$.
\end{proof}

Next, we show the existence of continuous measurable version of solutions to  the stochastic delayed HNNM \eqref{1.4}  constructed by the solution of \eqref{2.4}.
\begin{thm}\label{thm2}
The trajectory field $u(t,\omega,\phi)$, $t\in [-\tau,\infty)$  of the stochastic delayed HNNM \eqref{1.4} has a Borel-measurable version $U(\cdot,\cdot,\cdot): \mathbb{R}^{+}\times \Omega\times \mathcal{C}\mapsto \mathcal{C}$ with the following properties:\\
(I) For each $\phi  \in \mathcal{C}, U(t,\cdot,\phi)=u(t,\cdot,\phi)$  a.s. on $\Omega$.\\
(II) For each $t \in \mathbb{R}^{+}$ and $\phi \in \mathcal{C}, U(t,\cdot,\phi): \Omega\mapsto \mathcal{C}$ is $\mathcal{F}_{t}$-measurable.\\
(III) There is a Borel set $\Omega_{0} \subset \Omega$ of full P-measure such that for all $\omega \in \Omega_{0}$ the map
$U(\cdot, \omega, \cdot): \mathbb{R}^{+} \times \mathcal{C}\mapsto \mathcal{C}$ is continuous.\\
(IV) For each $t \in \mathbb{R}^{+}$ and every $\omega \in \Omega_{0}$, the map $U(t, \omega, \cdot): \mathcal{C}\mapsto \mathcal{C}$ is continuous.\\
(V) For each $t > \tau$ and all $\omega \in \Omega_{0}$, the map $U(t, \omega, \cdot): \mathcal{C} \rightarrow \mathcal{C}$ is compact.
\end{thm}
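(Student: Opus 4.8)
The plan is to construct $U$ explicitly by reversing the substitution that produced the random functional differential equation \eqref{2.4}. With $\tilde{u}(\cdot,\omega,\phi)$ the solution furnished by Lemma~\ref{lem5} and $v(t,\omega,\cdot)$ the bounded linear cocycle solving \eqref{2.1}, I would let $U(t,\omega,\phi)$ be the element of $\mathcal{C}$ defined, for $s\in[-\tau,0]$, by
\[
U(t,\omega,\phi)(s):=v(t+s,\omega,\tilde{u}(t+s,\omega,\phi))\ \text{ for }\ t+s\geq 0,\qquad U(t,\omega,\phi)(s):=\phi(t+s)\ \text{ for }\ {-\tau}\leq t+s<0 .
\]
The two prescriptions agree at $t+s=0$ since $v(0,\omega,\cdot)=\mathrm{id}_{\mathbb R^n}$ and $\tilde{u}(0,\omega,\phi)=\phi(0)$, so $U(t,\omega,\phi)\in\mathcal{C}$; moreover for $t>\tau$ one has $t+s\geq t-\tau>0$ for every $s\in[-\tau,0]$, so in that case $U(t,\omega,\phi)=\Psi(t,\omega)\,\tilde{u}_t(\cdot,\omega,\phi)$, where $\Psi(t,\omega):\mathcal{C}\to\mathcal{C}$ is the substitution map $(\Psi(t,\omega)\psi)(s):=v(t+s,\omega,\psi(s))$. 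The motivation is exactly that, writing $u(t)=v(t,\omega,\cdot)\tilde{u}(t)$, the definitions \eqref{2.2}--\eqref{2.3} of $A$ are tailored so that $u$ solves \eqref{1.4}.

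To prove (I) I would verify that $t\mapsto v(t,\omega,\cdot)\tilde{u}(t,\omega,\phi)$ is a solution of \eqref{1.4}. Since $\tilde{u}$ is $C^1$ in $t$ by Lemma~\ref{lem5}, it has finite variation, so the cross variation between the matrix cocycle $v$ and $\tilde{u}$ vanishes and the product rule gives $d(v\tilde{u})=(dv)\tilde{u}+v\,d\tilde{u}$ (in both the It\^{o} and Stratonovich senses); using \eqref{2.1} for $dv$ one checks $(dv)\tilde{u}=\Sigma(v\tilde{u})\diamond dw$, and using \eqref{2.4}--\eqref{2.5} for $d\tilde{u}$ together with the identity $v_t(\cdot,\omega,\tilde{u}_t)=(v\tilde{u})_t$ coming from \eqref{2.3} and $u_0=\phi$, one gets $v\,\tilde{u}'=-C(v\tilde{u})+Hf(v\tilde{u})+Bg((v\tilde{u})_t)$. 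Hence $v\tilde{u}$ solves \eqref{1.4} with initial datum $\phi$, and pathwise uniqueness (Lemma~\ref{lem2.1}) forces $v(t,\omega,\cdot)\tilde{u}(t,\omega,\phi)=u(t,\omega,\phi)$ for all $t\geq0$ and P-a.e.\ $\omega$; reading this off on the segment $[-\tau,0]$ is precisely (I).

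Properties (II)--(IV) then follow from the regularity already recorded for $v$ and $\tilde{u}$. For (II): for indices $s$ with $t+s\geq0$, $v(t+s,\cdot,\cdot)$ is $\mathcal{F}_t$-adapted and $\tilde{u}(t+s,\cdot,\phi)$ is $\mathcal{F}_t$-measurable by Lemma~\ref{lem5}, while the remaining branch is deterministic, so $U(t,\cdot,\phi)$ is $\mathcal{F}_t$-measurable. For (III): let $\Omega_0$ be the full-measure set on which $v(\cdot,\omega,\cdot)$ is a bounded continuous linear cocycle with bounded inverse and the conclusions of Lemma~\ref{lem5} hold. On $\Omega_0$, $v(t,\omega,\cdot)$ is linear with $\|v(t,\omega,\cdot)\|\leq L_v$, and $\tilde{u}(\cdot,\omega,\cdot)$ is jointly continuous in $(t,\phi)$ — continuity in $\phi$ being obtained by a Gr\"{o}nwall estimate on the integral equation \eqref{2.6a} applied to the difference of two solutions, using the bounds \eqref{2.8a}--\eqref{2.8b}, exactly as in the computation \eqref{44}--\eqref{48} — so $U(\cdot,\omega,\cdot)$ is continuous on $\mathbb{R}^+\times\mathcal{C}$; (IV) is the specialization to a fixed $t$. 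Joint Borel measurability of $U$ on $\mathbb{R}^+\times\Omega\times\mathcal{C}$ is then automatic from the standard fact that a map which is continuous in $(t,\phi)$ and $\mathcal{F}$-measurable in $\omega$, with separable metric target, is $(\mathcal{B}(\mathbb{R}^+)\times\mathcal{F}\times\mathcal{B}(\mathcal{C}))$-measurable.

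Finally, for (V), fix $t>\tau$, so $U(t,\omega,\cdot)=\Psi(t,\omega)\circ\tilde{u}_t(\cdot,\omega,\cdot)$ on $\Omega_0$. The map $\Psi(t,\omega)$ is continuous and maps bounded sets to bounded sets, since $\|\Psi(t,\omega)\psi-\Psi(t,\omega)\chi\|=\sup_{s\in[-\tau,0]}|v(t+s,\omega,\psi(s)-\chi(s))|\leq L_v\|\psi-\chi\|$ and $\|\Psi(t,\omega)\psi\|\leq L_v\|\psi\|$, and $s\mapsto v(t+s,\omega,\psi(s))$ lies in $\mathcal{C}$ by joint continuity of $v$. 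Since continuous maps send relatively compact sets to relatively compact sets and $\tilde{u}_t(\cdot,\omega,\cdot):\mathcal{C}\to\mathcal{C}$ is compact for $t>\tau$ by Lemma~\ref{lem5}, the composition $U(t,\omega,\cdot)$ is compact, which is (V). I expect the main difficulty to be (I): rigorously justifying the transformation $u=v\tilde{u}$ — in particular the product rule for the linear cocycle $v$ acting on the $C^1$ process $\tilde{u}$ with the $\diamond$-interpretation of the noise, the identity $v_t(\cdot,\omega,\tilde{u}_t)=u_t$ for the delayed term, and the appeal to pathwise uniqueness — while the remaining items are bookkeeping with the uniform bound on $v$ and the properties of $\tilde{u}$ from Lemma~\ref{lem5}.
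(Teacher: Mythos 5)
Your proposal is correct and follows essentially the same route as the paper: you define $U(t,\omega,\phi)=v_t(\cdot,\omega,\tilde{u}_t(\cdot,\omega,\phi))$, verify via the It\^{o}/product rule (the paper cites Bismut's generalized It\^{o} formula, which amounts to the same computation since $\tilde{u}$ has $C^1$ paths) that $v\tilde{u}$ solves \eqref{1.4}, and obtain (II)--(V) from the measurability, continuity and compactness of $\tilde{u}_t$ in Lemma \ref{lem5} together with the boundedness and linearity of $v$. Your explicit appeal to pathwise uniqueness (Lemma \ref{lem2.1}) to identify $v\tilde{u}$ with $u$ in (I) is a small but welcome addition that the paper leaves implicit.
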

\begin{proof}
Define $u:[-\tau, \infty) \times \Omega \times \mathcal{C} \rightarrow \mathbb{R}^n$ by
\begin{equation}\label{3.11}
\begin{aligned}
u(t, \omega,  \phi)&=\left\{\begin{array}{ll}
v\left(t, \omega,\tilde{u}(t, \omega,\phi)\right) &\quad   t\geq 0, \\
 \phi(t) &\quad   t \in[-\tau, 0),
\end{array}\right.
\end{aligned}
\end{equation}
We prove in the following that the above defined $u$ is in fact the solution of the stochastic delayed HNNM \eqref{1.4}. It follows from Lemma \ref{lem5} that for each $t \in \mathbb{R}^{+}$ and $\phi\in \mathcal{C}$, $\tilde{u}(t, \omega, \phi)$ is  jointly measurable in $(t, \omega)$ and $\tilde{u}(t, \cdot, \phi)$ is $\mathcal{F}_t$-adapted, which combined with the $\mathcal{F}_t$-adaptivity of $v(t,\omega,\cdot)$ indicates   that the process $u(t,\cdot,\phi)$ is $\mathcal{F}_t$-adapted, for each $t \geq 0$. Since $\left\{\tilde{u}(t, \cdot, \phi): t \geq 0\right\}$ is $\mathcal{F}_{t}$-adapted and has $\mathrm{C}^{1}$ sample paths, by taking  Ito differentials of \eqref{3.11} and invoking Bismut's generalized Ito formula for stochastic flows, we have for any $t>0$
\begin{equation}\label{3.13}
\begin{aligned}
d u\left(t, \omega, \phi\right)=&v\left(t, \omega, \frac{d \tilde{u}(t, \omega, \phi)}{d t}\right) d t\\
&+\sigma\left(v\left(t, \cdot,\tilde{u}(t, \omega, \phi)\right)\right) d w(t)\\
=&-Cv(t,\omega, \tilde{u}(t, \omega, \phi))+H f(v(t,\omega, \tilde{u}(t, \omega, \phi))) \\ &+Bg(v_t(\cdot,\omega,\tilde{u})) +\sigma\left(v(t,\omega, \tilde{u}(t, \omega, \phi))\right) d w(t).
\end{aligned}
\end{equation}
Then, in the case $t>0$, we have
\begin{equation}\label{3.14}
\begin{aligned}
v_t(\cdot,\omega,\tilde{u}_t(\cdot, \omega, \phi))&=\left\{\begin{array}{ll}
v(t+\cdot, \omega, \tilde{u}_t(\cdot, \omega, \phi)),  t+\cdot \geq 0 \\ \phi(t+\cdot),  -\tau\leq t+\cdot<0
\end{array}\right.\\
&=u_t(\cdot, \omega,\phi),
\end{aligned}
\end{equation}
for any $s \in [-\tau, 0]$ and P-a.e. $\omega\in \Omega$.
Therefore \eqref{3.13} becomes
\begin{equation}\label{3.15}
\begin{aligned}
d u\left(t, \omega, \phi\right)
=&-Cu(t,\omega,\phi)+H f(u(t,\omega,\phi)) \\
&+Bg(u_t(\cdot,\omega,\phi)) +\sigma\left(u(t,\omega,\phi)\right) d w(t),
\end{aligned}
\end{equation}
i.e., $u(t,\omega,\phi)$ satisfies stochastic delayed HNNM \eqref{1.4} and
\begin{equation}\label{3.16}
\begin{aligned}
U(t, \omega,\phi) &:=v_t(\cdot,\omega,\Tilde{u}_t(\cdot, \omega, \phi))
\end{aligned}
\end{equation}
is the version of its trajectory field we are seeking.

From the construction of $U(t, \omega,\phi)$, one can easily see that statement (I) holds. It remains to show that $U(t, \omega,\phi)$ satisfies all the requirements of the theorem. We begin by proving that $U(t, \omega,\phi)$ is jointly (Borel)-measurable in $(t, \omega)$. By the definition of \eqref{2.3}, one can clear see that \begin{equation}\label{3.18}
\begin{aligned}
&\mathbb{R}^{+} \times \Omega \rightarrow L\left(\mathcal{C}\right) \\
&(t, \omega) \mapsto v_t(s, \omega, \cdot)
\end{aligned}
\end{equation}
is Borel-measurable   for each $s\in [-\tau,0]$. But the Borel field of $C\left([-\tau,0] \times \mathbb{R}^n, \mathbb{R}^n\right) $ is identified with that of $C([-\tau, 0],L(\mathbb{R}^n))$ and the latter is generated by evaluations, so it follows immediately that the map
\begin{equation}\label{3.19}
\begin{gathered}
\Lambda: \mathbb{R}^{+} \times \Omega \rightarrow  L\left(\mathcal{C}\right) \\
(t, \omega) \mapsto v_{t}\left(\cdot, \omega, \cdot\right)
\end{gathered}
\end{equation}
is Borel-measurable and has continuous sample paths. It follows from Lemma  \ref{lem5} that
$\tilde{u}_t(\cdot, \omega, \phi)$ is Borel-measurable on $\mathbb{R}^{+} \times \Omega \times \mathcal{C}$.  Therefore the composition
 \begin{equation}\label{3.21}
U(t, \omega,\phi) :=v_t(\cdot,\omega,\tilde{u}_t(\cdot, \omega, \phi))
 \end{equation}
 is measurable on $\mathbb{R}^{+} \times \Omega \times \mathcal{C}$ and $U\left(t, \omega, \cdot\right): \mathcal{C} \rightarrow \mathcal{C}$ is continuous  for $t \in \mathbb{R}^{+}$, P-a.e. $\omega\in \Omega$. Also for P-a.e. $\omega\in \Omega$, $U(\cdot, \omega, \cdot)$ is continuous on $\mathbb{R}^{+} \times \mathcal{C}$. This proves assertions (II), (III) and (IV) of the theorem.

At last, we prove statement (V) of the theorem. Fix $t > \tau$ and pick any universal Borel set $\Omega_{0} \subset \Omega$ of full $P$-measure such that $v(t, \omega, \cdot): \mathbb{R}^{n} \rightarrow \mathbb{R}^{n}$ is linear for all $\omega \in \Omega_{0}$. Take $\omega \in \Omega_{0}$,
then the composition \eqref{3.21} yields compactness of $ U(t, \omega, \cdot): \mathcal{C} \rightarrow \mathcal{C}$ as we have already established that the map $\tilde{u}_{t}(\cdot,\omega, \cdot): \mathcal{C }\rightarrow \mathcal{C }$ is compact for any $t>\tau$. This completes the proof of the theorem.
\end{proof}

In  Theorem \ref{thm2}, we construct a continuous measurable version $U(\cdot, \omega, \cdot)$ of \eqref{1.4} that generates a random semiflow. The idea originates from the early works \cite{M90}, \cite{MS2003} and \cite{MS2004}, where the authors  recasted the  stochastic delayed differential equations  into a Hilbert space to study the Lyapunov spectrum and invariant manifolds.  Our work can be regarded as a first attempt to study the random attractors of the HNNM \eqref{1.4} in Banach spaces.

\subsection*{Cocycle property}

The remaining part of this section is  devoted to showing the cocycle property of the trajectory field version $\mathrm{R}^{+} \times \Omega \times \mathcal{C} \rightarrow \mathcal{C}$ constructed in Theorem \ref{thm2} using   the  Wong-Zakai  approximation  of the Wiener process, i.e., a  piece-wise linear interpolation of its sample paths. See Twardowsky \cite{tw}.

The Ito type \eqref{2.1} is equivalent to the following Stratonovich equation
\begin{equation}\label{3.96}
\left\{\begin{array}{l}
d \hat{v}(t,\cdot,\cdot)=\sum_{i=1}^{m}\Sigma\circ \hat{v}(t,\cdot,\cdot) \circ d w_i(t)\\
-\frac{1}{2} \sum_{i=1}^{m}\left(\Sigma \circ\left([\Sigma \circ \hat{v}(t, \cdot, \cdot)](\cdot)\left(e_{i}\right)\right\}\right)(\cdot)\left(e_{i}\right) d t, \\
\hat{v}(0, \cdot, \cdot)=\mathrm{id}_{\mathbb{R}^{n}},
\end{array}\right.
\end{equation}
where the notation $\left[\Sigma \circ  A\right](\cdot)\left(e_{i}\right), A \in L\left(\mathbb{R}^{n}\right)$  stands for the map
$$
\mathbb{R}^{n} \rightarrow \mathbb{R}^{n}: v\rightarrow \Sigma \circ A(v)\left(e_{i}\right).
$$
For each $k \geq 1$,  let $\hat{v}^{k}: \mathbb{R}^{+} \times \Omega \rightarrow L\left(\mathbb{R}^{n}\right)$ be the fundamental  %matrix
solution of the following random differential equation+
\begin{equation}\label{3.97}
\left\{\begin{array}{l}
d \hat{v}^{k}(t,\cdot,\cdot)=\sum_{i=1}^{m}\left[\Sigma {\circ} \hat{v}^{k}(t, \cdot, \cdot)\right](\cdot)\left(e_{i}\right)\left(W_{i}^{k}\right)^{\prime}(t) d t\\
-\frac{1}{2} \sum_{i=1}^{m}\left(\Sigma \circ\left([\Sigma \circ \hat{v}^{k}(t, \cdot, \cdot)](\cdot)\left(e_{i}\right)\right\}\right)(\cdot)\left(e_{i}\right) d t, \\
\varphi(0, \cdot, \cdot)=\mathrm{id}_{\mathbb{R}^{n}}.
\end{array}\right.
\end{equation}
Here $W_{i}^{k}(t)$ is the piece-wise linear approximation  of $\omega_i, i=1,2,\cdots,m$, which is defined by
$$
\left(W_i^k\right)^{\prime}(t, \omega)=\left(\omega_i^k\right)^{\prime}(t), \quad \omega \in \Omega, \quad t \geq 0,
$$
where
$$
\omega_i^k(t)=k\left[\omega\left(\frac{j+1}{k}\right)-\omega_i\left(\frac{j}{k}\right)\right]\left(t-\frac{j}{k}\right)+\omega_i\left(\frac{j}{k}\right),
$$
for $\quad \frac{j}{k} \leq t<\frac{j+1}{k}, \quad j=0,1,2, \ldots$ and $\omega_i$ is standard mutually independent Brownian motions. It follows from \cite{IW} (see also Mohamed ref  \cite{M90} and Twardowsky \cite{tw}) that for a.a. $\omega \in \Omega$,
\begin{equation}\label{3.98}
\lim _{k \rightarrow \infty} v^{k}(t, \omega, \cdot)=v(t, \omega, \cdot)
\end{equation}
in $L\left(\mathbb{R}^{n}\right)$ uniformly in $t \in[0, T]$, for $0<T<\infty$.

\begin{thm}\label{thm3}
For each integer $k \geq 1$, let
$\hat{v}^k(t, \omega, \cdot): \mathbb{R}^{n} \rightarrow \mathbb{R}^{n}$ be the solution of \eqref{3.97} and define the following operator
\begin{equation}\label{25}
\begin{aligned}
A^k(t,\omega, \phi)\triangleq &[\hat{v}^k(t, \omega, \cdot)]^{-1} [-C\hat{v}^k(t,\omega,\phi(0))+\\
&H f(\hat{v}^k(t,\omega,\phi(0))) +Bg\left(\hat{v}^k_t(\cdot,\omega,\phi)\right)],
\end{aligned}
\end{equation}
with $\hat{v}^k_t(\cdot, \omega, \cdot): [-\tau,0]\times \mathcal{C}\rightarrow \mathcal{C}$ being defined by
\begin{equation}\label{26}
\begin{aligned}
\hat{v}^k_t(s, \omega, \phi)&=\left\{\begin{array}{ll}
\hat{v}^k(t+s, \omega, \phi(0)): & t+s\geq 0, \\
\phi(t+s) & -\tau \leq t+s<0.
\end{array}\right.\\
\end{aligned}
\end{equation}
Let $0<T<\infty$. Denote by $B$  a bounded set in $\mathcal{C}$ and $\Tilde{u}^k(t, \omega, \cdot):[-\tau, \infty)\rightarrow \mathbb{R}^{n}$ the solution of the following random retarded  partial  differential equation
\begin{equation}\label{27}
\left\{\begin{array}{l}
\frac{d \Tilde{u}^k(t)}{d t}=A^k(t, \omega, \Tilde{u}^k_t), \quad t>0, \\
 \Tilde{u}^k_{0}=\phi \in \mathcal{C}.
\end{array}\right.
\end{equation}
Define
\begin{equation}\label{3.50}
\begin{aligned}
U^k(t, \omega,\phi) &:=v^k_t(\cdot,\omega,\tilde{u}^k_t(\cdot, \omega, \phi)),
\end{aligned}
\end{equation}
then $U^k(t, \omega,\phi)$ satisfies
$$
\lim _{k \rightarrow \infty} \sup _{0 \leq t \leq T} \sup _{\phi\in B}\left\|U^k(t, \omega, \phi)-U(t, \omega, \phi )\right\|=0
$$
for P-a. e. $\omega \in \Omega$.
\end{thm}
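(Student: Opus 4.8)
The plan is to establish the convergence $U^k \to U$ uniformly on $[0,T]\times B$ by decomposing it into two pieces, namely the convergence of the linear random semiflows $v^k \to v$ (which is already available from \eqref{3.98}) and the convergence of the solutions $\tilde u^k \to \tilde u$ of the random retarded functional differential equations \eqref{27} and \eqref{2.4}. Since $U^k(t,\omega,\phi)=v^k_t(\cdot,\omega,\tilde u^k_t(\cdot,\omega,\phi))$ and $U(t,\omega,\phi)=v_t(\cdot,\omega,\tilde u_t(\cdot,\omega,\phi))$, I would first write, for $t\geq 0$ and $s\in[-\tau,0]$ with $t+s\geq 0$,
\begin{equation}\label{split1}
\begin{aligned}
\bigl\|U^k(t,\omega,\phi)-U(t,\omega,\phi)\bigr\|
\leq{}& \sup_{-\tau\leq s\leq 0}\bigl|v^k(t+s,\omega,\tilde u^k(t+s,\omega,\phi))-v(t+s,\omega,\tilde u^k(t+s,\omega,\phi))\bigr|\\
&+\sup_{-\tau\leq s\leq 0}\bigl|v(t+s,\omega,\tilde u^k(t+s,\omega,\phi))-v(t+s,\omega,\tilde u(t+s,\omega,\phi))\bigr|,
\end{aligned}
\end{equation}
where for $t+s<0$ both sides are just $\phi(t+s)$ and the difference vanishes. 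The first term is controlled by $\sup_{0\le r\le T}\|v^k(r,\omega,\cdot)-v(r,\omega,\cdot)\|_{L(\mathbb{R}^n)}$ times a uniform bound on $|\tilde u^k(r,\omega,\phi)|$, and the second by $L_v$ times $\sup_{-\tau\le r\le T}|\tilde u^k(r,\omega,\phi)-\tilde u(r,\omega,\phi)|$ using linearity and the uniform bound $\|v(r,\omega,\cdot)\|\le L_v$. So everything reduces to (a) a uniform-in-$k$, uniform-in-$\phi\in B$ a priori bound on $\tilde u^k$, and (b) the uniform convergence $\tilde u^k\to\tilde u$.

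For (a), I would repeat the Grönwall argument of Lemma \ref{lem5} applied to \eqref{27}: the only $k$-dependence enters through $v^k$ and $(v^k)^{-1}$, and since \eqref{3.98} holds uniformly on $[0,T]$ together with the corresponding convergence of the inverses, the operator norms $\|v^k(t,\omega,\cdot)\|$ and $\|(v^k(t,\omega,\cdot))^{-1}\|$ are bounded uniformly in $k$ and $t\in[0,T]$ for P-a.e.\ $\omega$; hence the linear growth estimate $\|A^k(t,\omega,\tilde u^k_t)\|\leq (c+hL_f+bL_g)\|\tilde u^k_t\|$ (with a constant independent of $k$) holds, and Lemma \ref{lem2} gives $\sup_{0\le t\le T}\|\tilde u^k_t(\cdot,\omega,\phi)\|\le C(T,\omega)\|\phi\|\le C(T,\omega)\sup_{\psi\in B}\|\psi\|$, uniformly in $k$ and $\phi\in B$. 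The same bound of course holds for $\tilde u$.

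For (b), I would subtract the variation-of-constants representations \eqref{2.6a} of $\tilde u$ and $\tilde u^k$ (the latter with $S$, $\eta$ replaced by the fundamental solution and kernel of the $k$-th linear equation). Writing $e^k(t):=\sup_{-\tau\le r\le t}|\tilde u^k(r,\omega,\phi)-\tilde u(r,\omega,\phi)|$, the difference splits into: the difference of the two fundamental solutions applied to $\phi$; the difference coming from $v^k,(v^k)^{-1}$ versus $v,v^{-1}$ inside the nonlinear integrand (controlled by $\|v^k-v\|+\|(v^k)^{-1}-v^{-1}\|\to0$ uniformly on $[0,T]$, times the uniform bounds from (a) and the Lipschitz/boundedness Hypotheses A1--A2); and a genuinely contractive term $\int_0^t(\text{const})\,e^k(s)\,ds$ coming from the Lipschitz continuity of $f$, $\tilde g$ composed with $v$. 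A Grönwall estimate then yields $e^k(T)\le \varepsilon_k e^{C T}$ with $\varepsilon_k\to0$, uniformly in $\phi\in B$. I expect the main obstacle to be the first piece here: showing that the fundamental solution $S^k$ and the representing measure $\eta^k$ of the $k$-th linearized equation \eqref{2.7} converge (uniformly on $[0,T]$) to those of the limiting equation, since this requires continuous dependence of the fundamental solution of a linear RFDE on its coefficients in the sense of \cite{JH} and the coefficient convergence $\tilde L^k(t,\omega)\to\tilde L(t,\omega)$ induced by $v^k\to v$. Once that continuous-dependence lemma is in place, combining (a), (b) and \eqref{split1} gives $\sup_{0\le t\le T}\sup_{\phi\in B}\|U^k(t,\omega,\phi)-U(t,\omega,\phi)\|\le (\text{something}\to0)$ for P-a.e.\ $\omega$, which is the claim.
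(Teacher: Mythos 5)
Your proposal is correct and follows essentially the same route as the paper: the same triangle-inequality decomposition of $U^k-U$ into a $(v^k_t-v_t)$ piece acting on $\tilde{u}^k_t$ and a $v_t$ piece acting on $\tilde{u}_t-\tilde{u}^k_t$, the same uniform bounds on $v$, $v^k$ and $\tilde{u}^k$, and the same variation-of-constants representation combined with Hypotheses A1--A2 and Gr\"{o}nwall's inequality to close the estimate. The one point where you are more careful than the paper is the convergence of the fundamental solutions: the paper's integral representation \eqref{28} for $\tilde{u}^k$ silently uses the limiting fundamental solution $S$ rather than a $k$-dependent $S^k$, so the continuous-dependence step you flag as the main obstacle is simply not addressed there.
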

\begin{proof}
Fix $0<T<\infty$ and $B\subset\mathcal{C}$ a bounded set throughout the proof. By Lemma \ref{lem5}, \eqref{27} admits a global solution for P-a. e. $\omega \in \Omega$, which is defined as follows.
\begin{equation}\label{28}
\left\{\begin{aligned}
\Tilde{u}^k(t) &=S(t)\ast\phi+\int_{0}^{t} S(t-s)[\hat{v}^k(s, \omega, \cdot)]^{-1}\circ\\
& [H f(\hat{v}^k(s,\omega,\Tilde{u}^k(s)))+ B\tilde{g}\left(\hat{v}^k_s(\cdot,\omega,\Tilde{u}^k_t)\right)] \mathrm{d} s, t \geq 0, \\
u(t) &=\phi(t), \quad t \in[-\tau, 0].
\end{aligned}\right.
\end{equation}
Therefore, by denoting $I(t,\omega,\phi)=\|U^k(t, \omega, \phi)-U(t, \omega, \phi )\|$, we have for $P$-a.e. $\omega \in \Omega$,
\begin{equation}\label{29}
\begin{aligned}
I(t,\omega,\phi)
=&\|v_t(\cdot,\omega,\tilde{u}_t(\cdot, \omega, \phi))-v^k_t(\cdot,\omega,\tilde{u}^k_t(\cdot, \omega, \phi))\|
\\ \leq &\|v_t(\cdot,\omega,\tilde{u}_t(\cdot, \omega, \phi))-v_t(\cdot,\omega,\tilde{u}^k_t(\cdot, \omega, \phi))\|
\\+&\|v_t(\cdot,\omega,\tilde{u}^k_t(\cdot, \omega, \phi))-v^k_t(\cdot,\omega,\tilde{u}^k_t(\cdot, \omega, \phi))\|
\\ \leq &\|v_t(\cdot,\omega,\tilde{u}_t(\cdot, \omega, \phi)-\tilde{u}^k_t(\cdot, \omega, \phi))]\|
\\+&\|[v_t(\cdot,\omega,\cdot)-v^k_t(\cdot,\omega,\cdot)]\tilde{u}^k_t(\cdot, \omega, \phi)]\|
\\
\leq & \|v_t(\cdot,\omega,\cdot)\|\|\tilde{u}_t(\cdot, \omega, \phi)-\tilde{u}^k_t(\cdot, \omega, \phi)\|
\\+&\|v_t(\cdot,\omega,\cdot)-v^k_t(\cdot,\omega,\cdot)\|\|\tilde{u}^k_t(\cdot, \omega, \phi)\|
\\
\triangleq & \|v(t,\omega,\cdot)\|I_1+I_2\|\tilde{u}^k_t(\cdot, \omega, \phi)\|.
\end{aligned}
\end{equation}
Since $\lim _{k \rightarrow \infty} v^{k}(t, \omega, \cdot)=v(t, \omega, \cdot)$, we have $I_2\rightarrow 0$ as $k\rightarrow \infty$. Because $\|v(t,\omega,\cdot)\|$ is bounded, we only need to  estimate  $I_1$.
\begin{equation}\label{29a}
\begin{aligned}
 |I_1|=&|\int_{0}^{t} S(t-s)[v^{-1}(s, \omega, \cdot)\circ H f(v(s,\omega,u(s)))-\\
&\hat{v}^k(s, \omega, \cdot)^{-1}\circ H f(\hat{v}^k(s,\omega,\Tilde{u}^k(s)))]|
\\&+| \int_{0}^{t} S(t-s)[v^{-1}(s, \omega, \cdot)\circ B\tilde{g}\left(v_s(\cdot,\omega,u_t)\right)-\\&
\hat{v}^k(s, \omega, \cdot)^{-1}\circ B\tilde{g}\left(\hat{v}^k_s(\cdot,\omega,\Tilde{u}^k_t)\right)] \mathrm{d} s|
\\&
\triangleq I_{11}+I_{12}.
\end{aligned}
\end{equation}
 We estimate each term on the right hand side of \eqref{29a} in the following by Lemmas \ref{lem5}. By the $\mathbf{Hypothesis \  A1}$, we can see
\begin{equation}\label{30}
\begin{aligned}
I_{11}\leq&\|\int_{0}^{t} S(t-s)v^{-1}(s, \omega, \cdot)\circ H[ f(v(s,\omega,u(s)))\\
&-f(\hat{v}^k(s,\omega,\Tilde{u}^k(s)))]\mathrm{d} s\|+\|\int_{0}^{t} S(t-s)[v^{-1}(s, \omega, \cdot)\\
&-\hat{v}^k(s, \omega, \cdot)^{-1}]\circ H f(\hat{v}^k(s,\omega,\Tilde{u}^k(s)))\mathrm{d} s\|\\&
\triangleq I_{111}+I_{112}.
\end{aligned}
\end{equation}
Since $f$ is bounded, we have for any $\rho \in \mathbb{R}^n$, $Hf(\rho)\leq M$ which together with  \eqref{3.98}, implies that for any $\varepsilon>0$, there exists sufficient large $K_0$ such that for any $k>K_0$
\begin{equation}\label{31}
\begin{aligned}
I_{112}\leq M\varepsilon.
\end{aligned}
\end{equation}
Moreover,
\begin{equation}\label{32}
\begin{aligned}
I_{111}\leq&\|\int_{0}^{t} S(t-s)[v^{-1}(s, \omega, \cdot)\circ H[ f(v(s,\omega,u(s)))\\
&-f(\hat{v}^k(s,\omega,u(s)))]\mathrm{d} s\|+\|\int_{0}^{t} S(t-s)[v^{-1}(s, \omega, \cdot)\\
& \circ H[ f(\hat{v}^k(s,\omega,u(s)))-f(\hat{v}^k(s,\omega,\Tilde{u}^k(s)))]\mathrm{d} s\|\\&
\leq CKL_f \int_{0}^{t} \mathrm{e}^{\varrho(t-s) / 2}\|v\left(s, \omega, u(s)\right)\\&
-\hat{v}^k(s,\omega,u(s))\|\mathrm{d} s
+KL_f \int_{0}^{t} \mathrm{e}^{\varrho(t-s) / 2}\\& \left\|u(s)-\Tilde{u}^k(s)\right\|\mathrm{d} s \triangleq I_{1111}+I_{1112}.
\end{aligned}
\end{equation}
It follows from  \eqref{3.98} that for any $\varepsilon>0$, there exists sufficient large $K_0$ such that for any $k>K_0$
\begin{equation}\label{33}
\begin{aligned}
I_{1111}\leq CKL_f \varepsilon \int_{0}^{t} \mathrm{e}^{\varrho(t-s) / 2}\mathrm{d} s\leq CKL_f \varepsilon.
\end{aligned}
\end{equation}
Thus, we have for $k>K_0$
\begin{equation}\label{34}
\begin{aligned}
I_1\leq & M\varepsilon+CKL_f \varepsilon+KL_f \int_{0}^{t} \mathrm{e}^{\varrho(t-s) / 2}\left\|u(s)-\Tilde{u}^k(s)\right\|\mathrm{d} s,
\end{aligned}
\end{equation}
which together with the Gr\"{o}nwall inequality indicates that $I_1\rightarrow 0$ as $k\rightarrow \infty$.
\end{proof}

We are now in the position to provide the cocycle property of $U(\cdot,\cdot,\cdot): \mathbb{R}^{+}\times \Omega\times \mathcal{C}\mapsto \mathcal{C}$. We directly give the following results for which the proof is the same as Theorem 3 in  \cite{M90}.

\begin{thm}\label{thm4} (The Cocycle Property)
Let $\theta: \mathbb{R}^{+} \times \Omega \rightarrow \Omega$ be the shift
corresponding Wiener measure on $(\Omega, \mathcal{F})$ which is defined by
$$\theta_t \omega(\cdot)=\omega(\cdot+t)-\omega(t), t \in \mathbb{R}.$$
 Then, there is a universal set $\Omega_0\subset \Omega$ of full Wiener measure such that
$$
U\left(t_{2}, \theta_{t_{1}}\omega, U\left(t_{1}, \omega,\phi\right)\right)=U\left(t_{1}+t_{2}, \omega,\phi\right)
$$
for all $\omega \in \Omega_0, \phi\in \mathcal{C}$ and $t_{1}, t_{2} \geq 0$.
\end{thm}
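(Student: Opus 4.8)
The plan is to transfer the cocycle identity from the Wong--Zakai approximants $U^k$, for which it holds by classical ODE theory, to the limit $U$ via the uniform convergence established in Theorem \ref{thm3}. First I would observe that for each fixed $k$, equation \eqref{27} is a pathwise random retarded functional differential equation driven by the piece-wise linear (hence locally $C^1$) path $W^k(\cdot,\omega)$, so its solution semiflow $\tilde u^k_t(\cdot,\omega,\cdot)$ together with the conjugation by $\hat v^k$ (which itself, being the fundamental solution of the random ODE \eqref{3.97}, satisfies $\hat v^k(t_1+t_2,\omega,\cdot)=\hat v^k(t_2,\theta_{t_1}\omega,\cdot)\circ\hat v^k(t_1,\omega,\cdot)$ by uniqueness of solutions and the shift property $W^k(\cdot+t_1,\theta_{t_1}\omega)-W^k(t_1,\theta_{t_1}\omega)=W^k(\cdot,\theta_{t_1}\omega)$ of the approximants) yields the cocycle identity
\[
U^k\bigl(t_2,\theta_{t_1}\omega,U^k(t_1,\omega,\phi)\bigr)=U^k(t_1+t_2,\omega,\phi)
\]
for all $t_1,t_2\ge 0$, $\phi\in\mathcal C$ and P-a.e. $\omega$. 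This step is essentially the deterministic cocycle property of a nonautonomous semiflow and requires only uniqueness of solutions to \eqref{27}, which Lemma \ref{lem5} (applied pathwise to the approximating coefficients) provides.

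Next I would pass to the limit $k\to\infty$. Fix $t_1,t_2\ge 0$ and $\phi\in\mathcal C$, and set $T=t_1+t_2$. By Theorem \ref{thm3}, $U^k(t_1,\omega,\phi)\to U(t_1,\omega,\phi)$ and $U^k(t_1+t_2,\omega,\phi)\to U(t_1+t_2,\omega,\phi)$ in $\mathcal C$, for P-a.e. $\omega$. For the nested term I would write
\[
\bigl\|U^k\bigl(t_2,\theta_{t_1}\omega,U^k(t_1,\omega,\phi)\bigr)-U\bigl(t_2,\theta_{t_1}\omega,U(t_1,\omega,\phi)\bigr)\bigr\|
\le \bigl\|U^k\bigl(t_2,\theta_{t_1}\omega,U^k(t_1,\omega,\phi)\bigr)-U^k\bigl(t_2,\theta_{t_1}\omega,U(t_1,\omega,\phi)\bigr)\bigr\|
+\bigl\|U^k\bigl(t_2,\theta_{t_1}\omega,U(t_1,\omega,\phi)\bigr)-U\bigl(t_2,\theta_{t_1}\omega,U(t_1,\omega,\phi)\bigr)\bigr\|.
\]
The second term tends to $0$ by Theorem \ref{thm3} applied with the fixed initial datum $U(t_1,\omega,\phi)$ and the shifted sample path $\theta_{t_1}\omega$ (noting that $\theta_{t_1}$ preserves the Wiener measure, so the full-measure set on which the convergence holds can be taken to contain $\theta_{t_1}\omega$; intersecting countably many such sets over rational $t_1$ and then using continuity in $t_1$ gives a single universal $\Omega_0$). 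The first term is handled by the Grönwall-type continuous dependence estimate already used inside the proofs of Lemma \ref{lem5} and Theorem \ref{thm3}: since $U^k(t_2,\theta_{t_1}\omega,\cdot)$ is, uniformly in large $k$, Lipschitz on bounded sets (the bounds \eqref{44}--\eqref{48} are uniform in $k$ because $\|v^k\|,\|(v^k)^{-1}\|$ are uniformly bounded by the convergence \eqref{3.98}), and $U^k(t_1,\omega,\phi)\to U(t_1,\omega,\phi)$, this term also tends to $0$.

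Combining the three limits with the $k$-level identity gives
\[
U\bigl(t_2,\theta_{t_1}\omega,U(t_1,\omega,\phi)\bigr)=U(t_1+t_2,\omega,\phi)
\]
first for P-a.e.\ $\omega$ with the exceptional set possibly depending on $(t_1,t_2,\phi)$; I would then remove this dependence by taking $t_1,t_2$ rational and $\phi$ in a countable dense subset of $\mathcal C$, and invoking the joint continuity of $U$ in $(t,\phi)$ from Theorem \ref{thm2}(III)--(IV) to extend to all $t_1,t_2\ge 0$ and all $\phi\in\mathcal C$ on one universal full-measure set $\Omega_0$. I expect the main obstacle to be the bookkeeping of exceptional null sets under the shift $\theta_{t_1}$ — making sure the convergence in Theorem \ref{thm3} can be invoked simultaneously at $\omega$ and at $\theta_{t_1}\omega$ for a co-countable family of times — together with verifying that the Lipschitz constants in the continuous-dependence estimate for $U^k(t_2,\theta_{t_1}\omega,\cdot)$ are genuinely uniform in $k$; both are routine given the uniform bounds on $v^k$ and $(v^k)^{-1}$, but they are where the argument must be written carefully.
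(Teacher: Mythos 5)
Your proposal is essentially the paper's approach: the paper gives no proof of its own but defers to Theorem 3 of \cite{M90}, whose argument is exactly the one you outline --- establish the cocycle identity pathwise for the Wong--Zakai approximants $U^k$ of the subsection preceding the theorem, then pass to the limit using the uniform convergence of Theorem \ref{thm3}. The one point to watch is that the piecewise-linear interpolant $W^k$ commutes with the shift $\theta_{t_1}$ only when $t_1$ lies on the grid $\frac{1}{k}\mathbb{Z}$, so the $k$-level identity should first be asserted for grid times and then extended by the density-and-continuity argument you already sketch.
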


\section{Random  attractors}\label{S2.2:ue}

In this section, we are concerned about the existence of random attractors of \eqref{1.4}, which relies on showing the existence of an absorbing set and the asymptotic compactness of the RDS $(\theta, U)$. Throughout the reminder of this paper,  we denote the family of  tempered sets in the state space $C$ by $\mathcal{D}$. We first give the following uniform estimate results.
\begin{lem}\label{lem6}
Assume that  conditions of Lemma \ref{lem5}, Theorem \ref{thm3} hold and
$$
c_1+\varrho/2<0<c_1+\varrho/2+\gamma,
$$
then $U$ possesses a random absorbing set $\{K(\omega)\}_{\omega \in \Omega} \in \mathcal{D}$, that is, for any $\{B(\omega)\}_{\omega \in \Omega} \in \mathcal{D}$ and $\mathbb{P}$-a.e. $\omega \in \Omega$, there exists $t_{B}(\omega)>0$ such that
$$
U\left(t, \theta_{-t} \omega, \phi\left(\theta_{-t} \omega\right)\right) \in K(\omega) \quad \text { for all } t \geq t_{B}(\omega)
$$
for all $t \geq t_{B}(\omega)$ and $\phi\left(\theta_{-t} \omega\right) \in B\left(\theta_{-t} \omega\right)$.
\end{lem}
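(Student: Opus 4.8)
The plan is to read off the absorbing set from the exponential a priori estimate \eqref{48} obtained in the proof of Lemma~\ref{lem5}, together with the uniform bound $\|v(t,\omega,\cdot)\|\le L_v$ on the random semiflow and the temperedness built into $\mathcal{D}$. Since Theorem~\ref{thm4} already guarantees that $(\theta,U)$ is an RDS, the only task is to produce a closed, measurable, tempered family $\{K(\omega)\}_{\omega\in\Omega}$ that pullback-absorbs every member of $\mathcal{D}$.

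First I would turn \eqref{48} into a clean exponential decay bound. The hypothesis $\gamma+\varrho/2+c_1>0$ makes the Gr\"onwall integral in \eqref{48} uniformly bounded in $t$, namely $\int_0^t e^{-(\gamma+\varrho/2+c_1)s}\,ds\le(\gamma+\varrho/2+c_1)^{-1}$, so
\begin{equation*}
\|\tilde u_t(\cdot,\omega,\phi)\|\le c_0\|\phi\|\Big(e^{-\gamma t}+\tfrac{c_1}{\gamma+\varrho/2+c_1}\,e^{(c_1+\varrho/2)t}\Big),\qquad t\ge\tau .
\end{equation*}
The second hypothesis $c_1+\varrho/2<0$ forces both exponentials to decay; putting $\kappa:=\min\{\gamma,\,-(c_1+\varrho/2)\}>0$ and $\tilde c:=c_0\bigl(1+c_1(\gamma+\varrho/2+c_1)^{-1}\bigr)$ yields
\begin{equation*}
\|\tilde u_t(\cdot,\omega,\phi)\|\le\tilde c\,\|\phi\|\,e^{-\kappa t},\qquad t\ge\tau,\ \ \text{P-a.e. }\omega\in\Omega,\ \phi\in\mathcal{C}.
\end{equation*}

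Next I would carry out the pullback substitution. Fix $B=\{B(\omega)\}_{\omega\in\Omega}\in\mathcal{D}$ and $\phi(\theta_{-t}\omega)\in B(\theta_{-t}\omega)$; replacing $\omega$ by $\theta_{-t}\omega$ and $\phi$ by $\phi(\theta_{-t}\omega)$ in the last display, and using that for $t>\tau$ every segment point $t+s$ with $s\in[-\tau,0]$ is nonnegative, so that by \eqref{3.16} and \eqref{2.3} one has $U(t,\theta_{-t}\omega,\phi)(s)=v\bigl(t+s,\theta_{-t}\omega,\tilde u(t+s,\theta_{-t}\omega,\phi)\bigr)$ and hence $\|U(t,\theta_{-t}\omega,\phi)\|\le L_v\|\tilde u_t(\cdot,\theta_{-t}\omega,\phi)\|$, I obtain
\begin{equation*}
\|U(t,\theta_{-t}\omega,\phi(\theta_{-t}\omega))\|\le L_v\,\tilde c\,e^{-\kappa t}\,d\bigl(B(\theta_{-t}\omega)\bigr),\qquad t>\tau .
\end{equation*}
Temperedness of $B$ gives $e^{-(\kappa/2)t}d(B(\theta_{-t}\omega))\to0$ as $t\to\infty$, so the right-hand side is bounded by $L_v\tilde c\,e^{-(\kappa/2)t}\bigl(e^{-(\kappa/2)t}d(B(\theta_{-t}\omega))\bigr)\to0$. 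Hence there is $t_B(\omega)\ge\tau$ such that $\|U(t,\theta_{-t}\omega,\phi(\theta_{-t}\omega))\|\le L_v$ for all $t\ge t_B(\omega)$ and all $\phi(\theta_{-t}\omega)\in B(\theta_{-t}\omega)$, and the deterministic closed ball $K(\omega):=\{\psi\in\mathcal{C}:\|\psi\|\le L_v\}$ --- nonempty, measurable and trivially tempered, hence in $\mathcal{D}$ --- is the desired absorbing set.

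The step I expect to be the main obstacle is the bookkeeping in the first paragraph: one must check that the two-sided condition $c_1+\varrho/2<0<c_1+\varrho/2+\gamma$ is precisely what is needed --- the lower bound $\gamma+\varrho/2+c_1>0$ to keep the Gr\"onwall integral bounded rather than exponentially growing, and the upper bound $c_1+\varrho/2<0$ to make the surviving prefactor decay --- and that the decay rate $\kappa$ beats, through temperedness, the at-most-subexponential growth of $d(B(\theta_{-t}\omega))$. A secondary care point is the passage from the estimate on $\tilde u_t$ to one on $U_t=v_t(\cdot,\omega,\tilde u_t)$: this uses that $v(t,\omega,\cdot)$ is linear with $\|v(t,\omega,\cdot)\|\le L_v$ uniformly in $t$, and that for $t>\tau$ the whole segment avoids the initial-data branch of \eqref{2.3}. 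If one only wanted an $\omega$-dependent absorbing ball, the uniform bound $L_v$ could be weakened to a tempered random one, but as stated a fixed ball works.
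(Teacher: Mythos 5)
Your proposal is correct and follows essentially the same route as the paper: both start from the exponential estimate \eqref{47}--\eqref{48} of Lemma~\ref{lem5}, use $c_1+\varrho/2<0<c_1+\varrho/2+\gamma$ to make the Gr\"onwall integral bounded and the prefactor decay, compose with the bounded linear flow $v$ to pass from $\tilde u_t$ to $U$, and invoke temperedness of $B$ after the pullback substitution $\omega\mapsto\theta_{-t}\omega$. If anything, your explicit use of temperedness and your choice of a fixed deterministic ball $K(\omega)=\{\psi:\|\psi\|\le L_v\}$ is cleaner than the paper's absorbing set, whose stated definition still carries a spurious dependence on $t$ and $\phi$.
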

\begin{proof}
For any  $\phi(\omega) \in B(\omega)$, it follows from \eqref{47} in the proof of Lemma \ref{lem5} that
\begin{equation}\label{47a}
\begin{aligned}
\mathrm{e}^{-\varrho t / 2}\left\|\tilde{u}_{t}\left(\cdot, \omega, \phi\right)\right\| \leq & c_{0}\|\phi\|[ \mathrm{e}^{-(\gamma+\varrho / 2)t}\\
&+c_{1}\mathrm{e}^{c_{1}t}  \int_{0}^{t} \mathrm{e}^{-(\gamma+\varrho / 2+c_1)s}\mathrm{d} s],
\end{aligned}
\end{equation}
where $c_{0}=l\mathrm{e}^{\gamma \tau}$ and $ c_{1}= (hL_f+bL_{\tilde{g}})\mathrm{e}^{-\varrho \tau / 2}$. This implies
\begin{equation}\label{48a}
\begin{aligned}
\left\|\tilde{u}_{t}\left(\cdot, \omega, \phi\right)\right\| \leq & c_{0}\|\phi\|[ \mathrm{e}^{-\gamma t}+c_{1}\mathrm{e}^{(c_{1}+\varrho / 2)t}  \int_{0}^{t} \mathrm{e}^{-(\gamma+\varrho / 2+c_1)s}\mathrm{d} s]
\end{aligned}
\end{equation}
for P-a.e. $\omega \in \Omega$ and $0<t<\rho$ by  Gr\"{o}nwall's inequality. Thus, we have
\begin{equation}\label{48b}
\begin{aligned}
\left\|U\left(t, \omega, \phi\right)\right\|=&\|v_t(\cdot,\omega,\tilde{u}_t(\cdot, \omega, \phi))\|\\
\leq & c_{0}\|\phi\|\|v_t(\cdot,\omega,\cdot)\|\mathrm{e}^{-\gamma t}+c_{0}c_{1}\mathrm{e}^{(c_{1}+\varrho / 2)t}\|\phi\|\times \\
  &\|v_t(\cdot,\omega,\cdot)\|\int_{0}^{t} \mathrm{e}^{-(\gamma+\varrho / 2+c_1)s}\mathrm{d} s
\end{aligned}
\end{equation}
for P-a.e. $\omega \in \Omega$. Therefore,
\begin{equation}\label{49a}
\begin{aligned}
\left\|U\left(t, \theta_{-t} \omega, \phi\right)\right\|=&\|v_t(\cdot,\theta_{-t} \omega,\tilde{u}_t(\cdot, \theta_{-t} \omega, \phi))\|\\
\leq & c_{0}\mathrm{e}^{-\gamma t}\|\phi(\theta_{-t} \omega)\|\|v_t(\cdot,\theta_{-t} \omega,\cdot)\|+c_{0}c_{1}\times \\
&(\frac{\mathrm{e}^{(c_{1}+\varrho / 2)t}}{c_1+\varrho/2+\gamma}-\frac{\mathrm{e}^{-\gamma t}}{c_1+\varrho/2+\gamma})\times \\
  &\|\phi(\theta_{-t} \omega)\| \|v_t(\cdot,\theta_{-t} \omega,\cdot)\|.
\end{aligned}
\end{equation}
Since $c_{1}+\varrho / 2<0, \gamma>0$, there must exits a $\lambda$ and $T_B>0$ such that for all $t>T_B$
$$
c_{0}\mathrm{e}^{-\gamma t}+c_{0}c_{1}(\frac{\mathrm{e}^{(c_{1}+\varrho / 2)t}}{c_1+\varrho/2+\gamma}-\frac{\mathrm{e}^{-\gamma t}}{c_1+\varrho/2+\gamma})<\lambda.
$$
Moreover, since $\|v_t(\cdot, \omega,\cdot)\|$ is bounded for all $t\in \mathbb{R}^+$ and P-a.e. $\omega \in \Omega$, if $\phi\left(\theta_{-t} \omega\right) \in B\left(\theta_{-t} \omega\right)$
 then  for all $t \geq t_{B}(\omega)$,
$$
\left\|U\left(t, \theta_{-t} \omega\right), \phi\left(\theta_{-t} \omega\right)\right\| \leq \lambda \|\phi(\theta_{-t} \omega)\| \|v_t(\cdot,\theta_{-t} \omega,\cdot)\|.
$$
Therefore, $K(\omega)=\left\{\vartheta \in \mathcal{C}:\|\vartheta\| \leq \lambda \|\phi(\theta_{-t} \omega)\| \|v_t(\cdot,\theta_{-t} \omega,\cdot)\|\right\}$ form a tempered bounded closed random absorbing set of RDS $U$
%$\{U(t, \omega,\cdot): t \geq 0, \omega \in \Omega\}$
in $\mathcal{D}$, which completes the proof.
\end{proof}

In the next lemma, we show that we can take initial data in $\{K(\omega)\}_{\omega \in \Omega}$ to obtain the pullback asymptotic compactness of $U$.
\begin{lem}\label{lem7}
Assume that $\{K(\omega)\}_{\omega \in \Omega}$ is a random absorbing in $\mathcal{C}$ for the RDS $U$. Then $(\theta, U)$ is pullback asymptotically compact.
\end{lem}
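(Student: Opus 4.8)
The plan is to exploit the factorization $U(t,\omega,\phi)=v_{t}\big(\cdot,\omega,\tilde u_{t}(\cdot,\omega,\phi)\big)$ from Theorem \ref{thm2} together with the two properties already proved in Lemma \ref{lem5}: the map $\tilde u_{t}(\cdot,\omega,\cdot):\mathcal{C}\to\mathcal{C}$ is compact for every fixed $t>\tau$, and $v_{t}(\cdot,\omega,\cdot)$ is a bounded linear operator on $\mathcal{C}$ (its norm controlled by $L_{v}$). Composing these, one sees that $U(t,\omega,\cdot):\mathcal{C}\to\mathcal{C}$ is compact for every fixed $t>\tau$ and P-a.e.\ $\omega$. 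This is the only structural ingredient about the semiflow that the argument needs; everything else is standard pullback bookkeeping with the shift $\theta_{t}$ and the absorbing set of Lemma \ref{lem6}.

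Concretely, I would fix $\omega\in\Omega_{0}$, a family $D\in\mathcal{D}$, a sequence $t_{n}\to\infty$, and points $x_{n}\in D(\theta_{-t_{n}}\omega)$, and fix once and for all a number $s>\tau$ (say $s=\tau+1$). For all $n$ large enough that $t_{n}>s$, the cocycle property of Theorem \ref{thm4} gives
\[
U(t_{n},\theta_{-t_{n}}\omega,x_{n})=U\!\left(s,\theta_{-s}\omega,\;U(t_{n}-s,\theta_{-t_{n}}\omega,x_{n})\right),
\]
using $\theta_{t_{n}-s}\theta_{-t_{n}}\omega=\theta_{-s}\omega$. Applying the absorbing inclusion of Lemma \ref{lem6} at the base point $\theta_{-s}\omega$: there is $t_{D}(\theta_{-s}\omega)>0$ with $U\big(r,\theta_{-r}\theta_{-s}\omega,D(\theta_{-r}\theta_{-s}\omega)\big)\subseteq K(\theta_{-s}\omega)$ for all $r\ge t_{D}(\theta_{-s}\omega)$; taking $r=t_{n}-s$ and noting $\theta_{-(t_{n}-s)}\theta_{-s}\omega=\theta_{-t_{n}}\omega$, we get, for all $n\ge N$,
\[
y_{n}:=U(t_{n}-s,\theta_{-t_{n}}\omega,x_{n})\in K(\theta_{-s}\omega).
\]
Since $K(\theta_{-s}\omega)$ is a bounded (closed ball) subset of $\mathcal{C}$ for P-a.e.\ $\omega$ by the construction in Lemma \ref{lem6}, the sequence $\{y_{n}\}_{n\ge N}$ is bounded in $\mathcal{C}$. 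Because $s>\tau$, the map $U(s,\theta_{-s}\omega,\cdot)$ is compact by the first paragraph, so it sends $\{y_{n}\}_{n\ge N}$ into a relatively compact set; hence $\{U(t_{n},\theta_{-t_{n}}\omega,x_{n})\}_{n\ge N}=\{U(s,\theta_{-s}\omega,y_{n})\}_{n\ge N}$ has a convergent subsequence in $\mathcal{C}$, which is precisely $\mathcal{D}$-pullback asymptotic compactness of $(\theta,U)$.

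The only point requiring care is the bookkeeping of the $\theta$-shifts, so that the absorbing inclusion is invoked at the shifted base point $\theta_{-s}\omega$ (not at $\omega$) and the two intertwining identities $\theta_{t_{n}-s}\theta_{-t_{n}}\omega=\theta_{-s}\omega$, $\theta_{-(t_{n}-s)}\theta_{-s}\omega=\theta_{-t_{n}}\omega$ are used correctly; and one must keep the splitting time $s$ strictly larger than $\tau$, since the compactness of $U(s,\omega,\cdot)$ supplied by Lemma \ref{lem5} fails at $s=\tau$. With those observations in place the argument is routine.
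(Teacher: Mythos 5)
Your proof is correct, and it takes a cleaner and more direct route than the paper's. The paper argues by a diagonal extraction: it picks a subsequence $n_{k}$ with $t_{n_{k}}-k\ge T$, sets $\eta_{k}=U(t_{n_{k}}-k,\theta_{-t_{n_{k}}}\omega,\phi_{n_{k}})\in K(\theta_{-k}\omega)$, rewrites $U(t_{n_{k}},\theta_{-t_{n_{k}}}\omega,\phi_{n_{k}})=U(k,\theta_{-k}\omega,\eta_{k})$ by the cocycle property, and then asserts that $\{U(k,\theta_{-k}\omega,\eta_{k})\}$ is precompact ``by the assumption.'' That last step is exactly where the compactness of the time-$s$ map for some fixed $s>\tau$ must enter (absorption alone only places the sequence in the closed bounded set $K(\omega)$, which is not compact in the infinite-dimensional space $\mathcal{C}$), and the paper leaves it implicit. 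You instead split at a single fixed time $s=\tau+1$, write $U(t_{n},\theta_{-t_{n}}\omega,x_{n})=U(s,\theta_{-s}\omega,y_{n})$ with $y_{n}\in K(\theta_{-s}\omega)$ by absorption at the shifted base point, and conclude by compactness of $U(s,\theta_{-s}\omega,\cdot)$ from Theorem~\ref{thm2}(V). This avoids the varying-time index $k$ entirely and supplies precisely the justification the paper's argument needs; your bookkeeping of the shifts $\theta_{t_{n}-s}\theta_{-t_{n}}\omega=\theta_{-s}\omega$ and $\theta_{-(t_{n}-s)}\theta_{-s}\omega=\theta_{-t_{n}}\omega$ is correct, as is your insistence on $s>\tau$. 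The only cosmetic remark is that one should fix the full-measure set $\Omega_{0}$ so that both the cocycle identity and the compactness of $U(s,\cdot,\cdot)$ hold simultaneously at $\omega$ and $\theta_{-s}\omega$; since $\theta_{-s}$ is measure preserving this is routine and the paper is no more careful on this point.
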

\begin{proof}
Take an arbitrary random set $\{B(\omega)\}_{\omega \in \Omega} \in \mathcal{D}$, a sequence $t_{n} \rightarrow+\infty$ and $\phi_n \in B\left(\theta_{-t_{n}} \omega\right)$. We have to prove that $\left\{U\left(t_{n}, \theta_{-t_{n}} \omega, \phi_n\right)\right\}$ is precompact. Since $\{K(\omega)\}$ is a random absorbing for $\Psi$, then there exists $T>0$ such that, for all $\omega \in \Omega$,
\begin{equation}\label{4.24}
U\left(t, \theta_{-t} \omega\right) B\left(\theta_{-t} \omega\right) \subset K(\omega)
\end{equation}
for all $t \geq T$.
Because $t_{n} \rightarrow+\infty$, we can choose $n_{1} \geq 1$ such that $t_{n_{1}}-1 \geq T$. Applying \eqref{4.24} for $t=t_{n_{1}}-1$ and $\omega=\theta_{-1} \omega$, we find that
\begin{equation}\label{4.25}
\eta_{1} \triangleq \Psi\left(t_{n_{1}}-1, \theta_{-t_{n_{1}}} \omega, \phi_{n_{1}}\right)  \in K\left(\theta_{-1} \omega\right)
\end{equation}
Similarly, we can choose a subsequence $\left\{n_{k}\right\}$ of $\{n\}$ such that $n_{1}<n_{2}<\cdots<n_{k} \rightarrow$ $+\infty$ such that
\begin{equation}\label{4.26}
\eta_{k} \triangleq \Psi\left(t_{n_{k}}-k, \theta_{-t_{n_{k}}} \omega, \phi_{n_{k}}\right)  \in K\left(\theta_{-k} \omega\right)
\end{equation}
Hence, by the assumption we conclude that
the sequence
\begin{equation}\label{4.27}
\left\{U\left(k, \theta_{-k} \omega, \eta_{k}\right)\right\}
\end{equation}is precompact.
On the other hand by \eqref{4.26}, we have
\begin{equation}\label{4.28}
\begin{aligned}
U(k, \theta_{-k} \omega, \eta_{k}) &=U(k, \theta_{-k} \omega,U(t_{n_{k}}-k, \theta_{-t_{n_{k}}} \omega, U_{n_{k}}) \\
&=U\left(t_{n_{k}}, \theta_{-t_{n_{k}}} \omega,\phi_{n_{k}}\right),
\end{aligned}
\end{equation}
for all $k \geq 1$. Combining \eqref{4.27} and \eqref{4.28}, we obtain that the sequence $\left\{U\left(t_{n_{k}}, \theta_{-t_{n_{k}}} \omega,\phi_{n_{k}}\right)\right\}$ is precompact. Therefore,  $\left\{U\left(t_{n}, \theta_{t_{n}} \omega,\phi_{n_{k}}\right) \right\}$ is precompact, which completes the proof.
\end{proof}

Lemma \eqref{lem6} says that the continuous RDS $\Psi$ has a random absorbing set while Lemma \eqref{lem7} tells us that $(\theta, U)$ is pullback asymptotically compact in $\mathcal{C}$. Thus, it follows from Lemma \ref{lem1} that the continuous RDS $(\theta, U)$ possesses a random attractor. Namely, we obtain the following result.

\begin{thm}\label{thm5.1} Assume that conditions of Lemma \ref{lem6} hold,  then the continuous RDS $U$ generated by  \eqref{1.4} admits a unique $\mathcal{D}$-pullback attractor in $\mathcal{C}$ belonging to the class $\mathcal{D}$.
\end{thm}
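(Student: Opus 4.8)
The plan is to assemble Theorem~\ref{thm5.1} directly from the machinery built in the preceding sections, since all the hard analytic work has already been localized in Lemmas~\ref{lem5}--\ref{lem7} and Theorems~\ref{thm2}--\ref{thm4}. First I would recall that by Theorems~\ref{thm2} and~\ref{thm4} the version $U(\cdot,\cdot,\cdot):\mathbb{R}^{+}\times\Omega\times\mathcal{C}\to\mathcal{C}$ is Borel-measurable, continuous in $(t,\phi)$ for $\omega$ in a full-measure set $\Omega_0$, and satisfies the cocycle identity $U(t_2,\theta_{t_1}\omega,U(t_1,\omega,\phi))=U(t_1+t_2,\omega,\phi)$ over the metric dynamical system $(\Omega,\mathcal{F},P,(\theta_t)_{t\in\mathbb{R}})$ from Theorem~\ref{thm4}. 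Together with $U(0,\omega,\cdot)=\mathrm{id}_{\mathcal{C}}$ (immediate from the construction \eqref{3.11}, \eqref{3.16} and $v(0,\omega,\cdot)=\mathrm{id}$), this verifies conditions (i)--(iii) of Definition~\ref{defn2}, so $(\theta,U)$ is a continuous RDS on the separable Banach space $\mathcal{C}=C([-\tau,0],\mathbb{R}^n)$.

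Next I would invoke Lemma~\ref{lem6}: under the hypotheses of Lemma~\ref{lem5} and Theorem~\ref{thm3} together with the spectral gap condition $c_1+\varrho/2<0<c_1+\varrho/2+\gamma$, the RDS $U$ admits a tempered, bounded, closed random absorbing set $\{K(\omega)\}_{\omega\in\Omega}\in\mathcal{D}$. Then Lemma~\ref{lem7} gives that $(\theta,U)$ is $\mathcal{D}$-pullback asymptotically compact in $\mathcal{C}$, using crucially the compactness of the map $\tilde{u}_t(\cdot,\omega,\cdot):\mathcal{C}\to\mathcal{C}$ for $t>\tau$ established in Lemma~\ref{lem5} (which is what makes $U(t,\omega,\cdot)$ compact for $t>\tau$ by Theorem~\ref{thm2}(V)) combined with the absorbing property.

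With a continuous RDS, a closed measurable $\mathcal{D}$-pullback absorbing set $K\in\mathcal{D}$, and $\mathcal{D}$-pullback asymptotic compactness in hand, Lemma~\ref{lem1} applies verbatim and yields a $\mathcal{D}$-pullback random attractor
$$
\mathcal{A}(\omega)=\bigcap_{\sigma\geq 0}\overline{\bigcup_{t\geq\sigma}U\bigl(t,\theta_{-t}\omega,K(\theta_{-t}\omega)\bigr)},
$$
which is compact, invariant under $U$, and pullback-attracts every $D\in\mathcal{D}$. Uniqueness within the class $\mathcal{D}$ follows by the standard argument: if $\mathcal{A}'$ were another $\mathcal{D}$-attractor, then since $\mathcal{A}'\in\mathcal{D}$ it is pullback-attracted by $\mathcal{A}$, while invariance of $\mathcal{A}'$ forces $\mathrm{dist}(\mathcal{A},\mathcal{A}')=0$ and symmetrically $\mathrm{dist}(\mathcal{A}',\mathcal{A})=0$, whence $\mathcal{A}=\mathcal{A}'$ by compactness. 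I do not anticipate a genuine obstacle at this stage — the theorem is essentially a bookkeeping corollary of the preceding lemmas; the only point requiring a little care is to state explicitly that $K(\omega)$ as defined in Lemma~\ref{lem6} is closed, nonempty and $\mathcal{F}$-measurable (a ball of tempered random radius), so that Lemma~\ref{lem1} is applicable, and to confirm that the asymptotic compactness in Lemma~\ref{lem7} is indeed with respect to the same class $\mathcal{D}$ of tempered sets used throughout.
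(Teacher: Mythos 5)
Your proposal is correct and follows essentially the same route as the paper: the paper's proof is precisely the observation that Lemma \ref{lem6} supplies the closed tempered random absorbing set, Lemma \ref{lem7} supplies $\mathcal{D}$-pullback asymptotic compactness, and Lemma \ref{lem1} then yields the attractor. Your additional remarks — explicitly verifying the cocycle and continuity properties via Theorems \ref{thm2} and \ref{thm4}, and supplying the standard uniqueness argument within $\mathcal{D}$ — are sound supplements that the paper leaves implicit.
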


\section{Exponentially attracting stationary solution}

In this section, we   derive sufficient conditions that guarantee  the random attractor is  an exponentially attracting random fixed point $\xi^{*}$ by a  general Banach fixed point theorem. n particular we use the following fixed point theorem in probability analysis, which is often  called the uniform strictly contracting property,  originates from the early work \cite{B76} and  was then extended to various versions in \cite{CKS04,PL12,KY21} and \cite{DLS03}. Here, we adopt the version in \cite{DLS03} for  infinite dimensional case.
\begin{lem}\label{lem6.1}  Let $(\mathcal{G}, d_{\mathcal{G}})$ be a complete metric space with bounded metric. Suppose that
$$
U(t, \omega, \mathcal{G}) \subset \mathcal{G}
$$
for $\omega \in \Omega, t \geq 0$, and that $x \rightarrow U(t, \omega, x)$ is continuous. In addition, we assume the contraction condition: There exists a constant $k<0$ such that, for $\omega \in \Omega$,
$$
\sup _{x \neq y \in \mathcal{G}} \log \frac{d_{\mathcal{G}}(U(1, \omega, x), U(1, \omega, y))}{d_{\mathcal{G}}(x, y)} \leq k.
$$
Then $U$ has a unique generalized fixed point $\gamma^{*}$ in $\mathcal{G}$. Moreover, the following convergence property holds:
$$
\lim _{t \rightarrow \infty} U\left(t, \theta_{-t} \omega, x\right)=\gamma^{*}(\omega)
$$
for any $\omega \in \Omega$ and $x \in \mathcal{G}$.
\end{lem}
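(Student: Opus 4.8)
The plan is to reduce the statement to the classical Banach contraction-mapping argument, run on the time-one map and transported through the cocycle identity of Theorem \ref{thm4}. First I would observe that the stated inequality with $k<0$ is nothing but the assertion that $U(1,\omega,\cdot)$ is a uniform contraction, namely $d_{\mathcal{G}}(U(1,\omega,x),U(1,\omega,y))\le q\,d_{\mathcal{G}}(x,y)$ for all $x,y\in\mathcal{G}$, with $q:=e^{k}<1$ independent of $\omega$. Composing along the cocycle, $U(n,\omega,\cdot)=U(1,\theta_{n-1}\omega,\cdot)\circ\cdots\circ U(1,\theta_{1}\omega,\cdot)\circ U(1,\omega,\cdot)$, this gives a contraction modulus $q^{n}$ for every integer $n\ge 1$, again uniformly in $\omega$; forward invariance $U(t,\omega,\mathcal{G})\subset\mathcal{G}$ keeps all iterates inside $\mathcal{G}$. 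Set $\mathrm{diam}(\mathcal{G}):=\sup_{x,y\in\mathcal{G}}d_{\mathcal{G}}(x,y)<\infty$, which is finite by the boundedness hypothesis on the metric.

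Next I would construct $\gamma^{*}$ as a pullback limit. Fix $\omega\in\Omega$ and $x\in\mathcal{G}$ and put $x_{n}:=U(n,\theta_{-n}\omega,x)$. Writing $x_{n+1}=U(n,\theta_{-n}\omega,U(1,\theta_{-(n+1)}\omega,x))$ by the cocycle (with $t_{1}=1$, $t_{2}=n$, base point $\theta_{-(n+1)}\omega$, using $\theta_{1}\theta_{-(n+1)}=\theta_{-n}$) and applying the time-$n$ contraction together with boundedness,
$$
d_{\mathcal{G}}(x_{n+1},x_{n})\le q^{n}\,d_{\mathcal{G}}\big(U(1,\theta_{-(n+1)}\omega,x),x\big)\le q^{n}\,\mathrm{diam}(\mathcal{G}),
$$
so $\{x_{n}\}$ is Cauchy and, by completeness of $\mathcal{G}$, converges to a point I call $\gamma^{*}(\omega)$. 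The same estimate with two initial points, $d_{\mathcal{G}}(U(n,\theta_{-n}\omega,x),U(n,\theta_{-n}\omega,y))\le q^{n}\mathrm{diam}(\mathcal{G})\to 0$, shows the limit is independent of $x$; and since each $\omega\mapsto U(n,\theta_{-n}\omega,x)$ is measurable (joint measurability of $U$ composed with measurability of $\theta$), the pointwise limit $\gamma^{*}$ is measurable, hence a bona fide random variable.

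Then I would upgrade the convergence to continuous time and check stationarity. For real $s=n+r$ with $n=\lfloor s\rfloor$ and $r\in[0,1)$, the cocycle gives $U(s,\theta_{-s}\omega,x)=U(n,\theta_{-n}\omega,U(r,\theta_{-s}\omega,x))$ (using $\theta_{r}\theta_{-s}=\theta_{-n}$), which by the time-$n$ contraction differs from $U(n,\theta_{-n}\omega,x)$ by at most $q^{n}\mathrm{diam}(\mathcal{G})\to 0$; hence $\lim_{s\to\infty}U(s,\theta_{-s}\omega,x)=\gamma^{*}(\omega)$, the asserted convergence. For the invariance $U(t,\omega,\gamma^{*}(\omega))=\gamma^{*}(\theta_{t}\omega)$, fix $t\ge0$, use continuity of $U(t,\omega,\cdot)$ to pass the limit inside, then the cocycle $U(t,\omega,U(n,\theta_{-n}\omega,x))=U(n+t,\theta_{-n}\omega,x)$, and recognize $\lim_{n}U(n+t,\theta_{-n}\omega,x)$ as the continuous-time pullback limit just established, evaluated at $\theta_{t}\omega$ along $s=n+t$ (since $\theta_{-s}\theta_{t}=\theta_{-n}$), i.e.\ as $\gamma^{*}(\theta_{t}\omega)$. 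Uniqueness follows the same way: any other stationary $\eta$ satisfies $\eta(\omega)=U(n,\theta_{-n}\omega,\eta(\theta_{-n}\omega))$, so $d_{\mathcal{G}}(\eta(\omega),\gamma^{*}(\omega))\le q^{n}\mathrm{diam}(\mathcal{G})\to 0$, whence $\eta=\gamma^{*}$.

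The only points that need genuine care are the bookkeeping of the $\theta$-shifts in the cocycle identity — repeatedly invoking $\theta_{n}\theta_{-n}=\mathrm{id}$ and tracking how $\theta_{-s}$ collapses to $\theta_{-n}$ once the fractional-time map $U(r,\cdot,\cdot)$ is peeled off — and the interpolation from integer to continuous time in the last step. Both are controlled uniformly by the bound $\mathrm{diam}(\mathcal{G})<\infty$, which is exactly why the hypothesis that the metric is bounded is imposed; everything else is the standard contraction-mapping principle applied fibrewise.
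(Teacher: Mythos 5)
Your argument is correct. Note that the paper does not actually prove this lemma: it is imported verbatim from the literature (the version in \cite{DLS03}, going back to \cite{B76} and \cite{CKS04}), and the standard proof there is exactly the pullback contraction argument you give — iterate the uniform time-one contraction along the cocycle to get modulus $e^{nk}$ for $U(n,\theta_{-n}\omega,\cdot)$, use the bounded metric to make the pullback sequence Cauchy, interpolate from integer to continuous time, and pass the limit through $U(t,\omega,\cdot)$ by continuity to get equivariance and uniqueness. Your bookkeeping of the shifts ($\theta_{1}\theta_{-(n+1)}=\theta_{-n}$, $\theta_{r}\theta_{-s}=\theta_{-n}$, $\theta_{-s}\theta_{t}=\theta_{-n}$) is the only delicate point and it is handled correctly, so the proposal is a complete and faithful reconstruction of the cited proof.
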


\begin{thm}\label{Theorem 6} Assume that conditions of Lemma \ref{lem6} hold. Moreover, assume that
$$
\frac{\varrho}{2}e^{[\frac{\varrho}{2}+L_v  (hL_f+bL_{\tilde{g}})]}+\tau-1<0.
$$Then the RDS $U$ generated by SNDRDE \eqref{1.4} possess a tempered random fixed point $\chi^{*}$, which is unique under all tempered random variables in $\mathcal{C}$  and  attracts exponentially fast every random variable in $\mathcal{C}$.
\end{thm}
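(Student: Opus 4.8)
The plan is to obtain $\chi^{*}$ from the abstract contraction principle Lemma \ref{lem6.1}. Two preliminary observations simplify matters. First, Hypothesis A2 gives $f(\mathbf 0)=g(\mathbf 0)=0$, and since $v(t,\omega,\cdot)$ is linear one checks directly from \eqref{2.5} that $\tilde u(t,\omega,\mathbf 0)\equiv\mathbf 0$, hence $U(t,\omega,\mathbf 0)\equiv\mathbf 0$; thus the constant map $\mathbf 0$ is already a stationary solution and the real content is its uniqueness among tempered random variables together with exponential attraction. Second, to meet the structural requirements of Lemma \ref{lem6.1} one works with $U$ restricted to the forward‑invariant bounded random set $\widehat K(\omega):=\overline{\bigcup_{s\ge 0}U(s,\theta_{-s}\omega,K(\theta_{-s}\omega))}$ built from the absorbing set $K$ of Lemma \ref{lem6} — forward invariance follows from the cocycle property Theorem \ref{thm4} together with the continuity in Theorem \ref{thm2}, and boundedness because the tail of the union lies in $K(\omega)$ while the finite part is a continuous image of a bounded set; on a bounded set the ambient metric of $\mathcal C$ is bounded, and $x\mapsto U(1,\omega,x)$ is continuous by part (IV) of Theorem \ref{thm2}.

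The heart of the argument is the uniform one‑step contraction estimate
$$\|U(1,\omega,\phi)-U(1,\omega,\psi)\|\le\kappa\,\|\phi-\psi\|,\qquad \kappa<1,\ \text{for P-a.e. }\omega.$$
To get it I would subtract the variation‑of‑constants identity \eqref{2.6a} written for the two data $\phi$ and $\psi$, obtaining an integral equation for $\tilde u(\cdot,\omega,\phi)-\tilde u(\cdot,\omega,\psi)$ whose free term is $S\ast(\phi-\psi)(t,\omega)$ and whose kernel involves $S(t-s,\omega)v^{-1}(s,\omega,\cdot)$ acting on the differences $H[f(v\tilde u^{\phi})-f(v\tilde u^{\psi})]$ and $B[\tilde g(v_s\tilde u^{\phi}_s)-\tilde g(v_s\tilde u^{\psi}_s)]$. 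Estimating the free term by $K_0e^{-\gamma t}\|\phi-\psi\|$ via \eqref{2.8a}, the kernel by $K_1e^{\varrho(t-s)/2}$ via \eqref{2.8b}, and the nonlinear differences by $L_v(hL_f+bL_{\tilde g})\|\tilde u_s^{\phi}-\tilde u_s^{\psi}\|$ using Hypothesis A1 with $\|v\|,\|v^{-1}\|\le L_v$, then passing to segment norms over $[-\tau,0]$ exactly as in \eqref{45}–\eqref{46} and applying the Gr\"onwall Lemma \ref{lem2}, one arrives at $\|\tilde u_t(\cdot,\omega,\phi)-\tilde u_t(\cdot,\omega,\psi)\|\le C(t)\|\phi-\psi\|$ with $C(t)$ an explicit combination of $K_0,\gamma,\varrho,\tau$ and $L_v(hL_f+bL_{\tilde g})$; multiplying by $\|v_t(\cdot,\omega,\cdot)\|\le L_v$ gives $\|U(t,\omega,\phi)-U(t,\omega,\psi)\|\le L_vC(t)\|\phi-\psi\|$, and at $t=1$ the displayed smallness hypothesis of the theorem is exactly what forces $\kappa:=L_vC(1)<1$, i.e. $k:=\log\kappa<0$.

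Once the contraction hypothesis of Lemma \ref{lem6.1} is checked, the lemma yields a unique generalized fixed point $\chi^{*}$ with $\lim_{t\to\infty}U(t,\theta_{-t}\omega,x)=\chi^{*}(\omega)$ for every $x$ and P-a.e. $\omega$; by the first observation $\chi^{*}\equiv\mathbf 0$, so it is trivially tempered and measurable, while uniqueness among tempered random variables follows by iterating the one‑step contraction ($U(n,\omega,\cdot)$ is a composition of $n$ maps each contracting with factor $\kappa$) and letting $n\to\infty$. Finally, since $\chi^{*}$ pullback‑attracts every initial datum and the $\mathcal D$‑pullback attractor $\mathcal A(\omega)$ of Theorem \ref{thm5.1} is the minimal compact pullback‑attracting random set, necessarily $\mathcal A(\omega)=\{\chi^{*}(\omega)\}$; the exponential rate is read off by iterating the one‑step estimate along $U(t,\cdot)=U(1,\cdot)\circ\cdots$, giving $\|U(t,\theta_{-t}\omega,x)-\chi^{*}(\omega)\|\le M_0e^{-\nu t}\|x-\chi^{*}(\omega)\|$ for some $M_0,\nu>0$.

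I expect the main obstacle to be this one‑step contraction estimate: carrying the constants $K_0,K_1,\gamma,\varrho,\tau,L_v$ cleanly through the Gr\"onwall step and confirming that the hypothesis $\frac{\varrho}{2}e^{[\frac{\varrho}{2}+L_v(hL_f+bL_{\tilde g})]+\tau}-1<0$ genuinely yields $L_vC(1)<1$ — the inequality as stated is delicate, and it may be necessary to exploit the absorbing bound of Lemma \ref{lem6} (so as to control the $f$‑term through $v$ rather than through $\tilde u$) or to replace the unit time step by a small $t_0>0$ in Lemma \ref{lem6.1}. Reconciling the fixed‑set formulation of Lemma \ref{lem6.1} with the $\omega$‑dependent invariant set $\widehat K(\omega)$ is a secondary point, handled either by the identification $\chi^{*}\equiv\mathbf 0$ or by equipping $\mathcal C$ with a bounded metric compatible with the norm contraction.
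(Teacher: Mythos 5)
Your proposal follows essentially the same route as the paper: verify the hypotheses of the contraction principle in Lemma \ref{lem6.1} by subtracting the variation-of-constants representation \eqref{2.6a} for two initial data, estimating the free term via \eqref{2.8a}, the kernel via \eqref{2.8b} and the nonlinearities via Hypothesis A1 together with the bound $L_v$ on $v$ and $v^{-1}$, and then applying Gr\"onwall to obtain the one-step contraction at $t=1$ under the stated smallness condition. Your added observations (that $\chi^{*}\equiv\mathbf{0}$ by Hypothesis A2, and the need to reconcile the bounded-metric requirement of Lemma \ref{lem6.1} with the invariant set) are sensible refinements of points the paper passes over, but they do not change the argument.
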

\begin{proof}
If $c_1+\varrho/2<0<c_1+\varrho/2+\gamma$, then the conditions of Theorem \ref{thm5.1} hold and hence \eqref{1.4} possess random attractors in $\mathcal{C}$. We will prove that  \eqref{1.4} admits a unique globally exponentially attracting random stationary solution in $\mathcal{C}$, which immediately implies the random attractor in $\mathcal{C}$ obtained in Theorem \ref{thm5.1} is the random fixed point $\chi^{*}$.

By  \eqref{48b}, we can see that for any $\phi \in \mathcal{C}$
\begin{equation}\label{48c}
\begin{aligned}
\left\|U\left(t, \omega, \phi\right)\right\|
\leq & c_{0}\|\phi\|\|v_t(\cdot,\omega,\cdot)\|\mathrm{e}^{-\gamma t}+c_{0}c_{1}\mathrm{e}^{(c_{1}+\varrho / 2)t}\|\phi\|\times \\
  &\|v_t(\cdot,\omega,\cdot)\|\int_{0}^{t} \mathrm{e}^{-(\gamma+\varrho / 2+c_1)s}\mathrm{d} s
\end{aligned}
\end{equation}
which implies that for any $\phi \in \mathcal{C}_{co}$, $U(t,\omega,\psi )\in \mathcal{C}$, i.e. $\mathcal{C}$ is invariant under the random semiflow $U$. Moreover, it follows from Lemma \ref{lem5} that $U$ is continuous in $\mathcal{C}$.

Therefore, we only need to prove the contraction property. That is, there exists $k<0$ such that
 \begin{equation}\label{6.1}
\begin{aligned}
\sup _{\varphi \neq \psi \in \mathcal{C}}\|U(1, \omega, \varphi)-U(1, \omega, \psi)\|\leq e^k\|\varphi-\psi\|.
\end{aligned}
\end{equation}
Hence, it suffices to prove that for any $\varphi,\psi\in \mathcal{C}$
 \begin{equation}\label{6.2}
\begin{aligned}
\|U(1, \omega, \varphi)-U(1, \omega, \psi)\|=&\|v_t(\cdot,\omega,\tilde{u}_t(\cdot, \omega, \varphi))-\\
&v_t(\cdot,\omega,\tilde{u}_t(\cdot, \omega, \psi))\|
\leq  e^k\|\varphi-\psi\|.
\end{aligned}
\end{equation}

By Eq. \eqref{2.6a} and Eq. \eqref{3.21}, we have for any $\varphi,\psi\in \mathcal{C}$
 \begin{equation}\label{6.3}
\begin{aligned}
\|G(t,\omega)\|&\leq \|v_t(\cdot,\omega,\tilde{u}_t(\cdot, \omega, \varphi)-\tilde{u}_t(\cdot, \omega, \psi))\|\\
&\leq
\|v_t(\cdot,\omega,\cdot)\|\{\|S(t,\omega)\ast[\varphi- \psi]\|+\\
&\sup _{\zeta\in[-\tau,0]} \int_{0}^{t+\zeta} S(t+\zeta-r,\omega)v^{-1}(s, \omega, \cdot)\circ\\
& [H(f(v_s(\cdot,\omega,\Tilde{u}_s(\cdot,\omega,\varphi)))-f(v_s(\cdot,\omega,\Tilde{u}_s(\cdot,\omega,\psi))))\\
&+ B(\tilde{g}\left(v_s(\cdot,\omega,\Tilde{u}_s(\cdot,\omega,\varphi))\right)-\\
&\tilde{g}\left(v_s(\cdot,\omega,\Tilde{u}_s(\cdot,\omega,\psi))\right))] \mathrm{d} s\}\\
& \leq L_v e^{\gamma \tau}e^{-\gamma t}\|\phi-\psi\|+L_v e^{\frac{\varrho \tau}{2}} (hL_f+bL_{\tilde{g}})\times\\
& \int_{0}^{t} e^{-\frac{\varrho }{2} (t-r)}\left\|U(r, \omega,\phi)-U(r, \omega,\psi)\right\|\mathrm{d}r.
\end{aligned}
\end{equation}
Multiply both sides of \eqref{6.3} by $e^{\frac{\varrho t}{2}}$ and denote by $H(t,\omega)=e^{\frac{\varrho t }{2}}G(t,\omega)$, $c_3(t)=L_v e^{\gamma \tau}e^{-\gamma t}e^{\frac{\varrho t}{2}}$, $c_4(t)=L_v e^{\frac{\varrho (\tau+t)}{2}} (hL_f+bL_{\tilde{g}})$, we obtain
 \begin{equation}\label{6.3b}
\begin{aligned}
\|H(t,\omega)\|& \leq  c_3(t)\|\phi-\psi\|+c_4(t)\int_{0}^{t}\|H(r,\omega)\|\mathrm{d}r.
\end{aligned}
\end{equation}
Again, the Gr\"{o}nwall inequality gives rise to
 \begin{equation}\label{6.4}
\begin{aligned}
\|H(t,\omega)\|& \leq \|\phi-\psi\|(c_3(t)+\int_{0}^{t}c_3(r) e^{\int_{r}^{t}c_4(s)ds}\mathrm{d}r).
\end{aligned}
\end{equation}
Dividing both sides of \eqref{6.4} by $e^{\frac{\varrho t}{2}}$ and take $t=1$ lead to
 \begin{equation}\label{6.5}
\begin{aligned}
\|G(1,\omega)\|& \leq \|\phi-\psi\|(c_3(1)e^{-\frac{\varrho}{2}}+e^{-\frac{\varrho}{2}}\int_{0}^{1}c_3(r) e^{\int_{r}^{1}c_4(s)ds}\mathrm{d}r).
\end{aligned}
\end{equation}
Therefore, if there exists $\varsigma<0$ such that $c_3(1)e^{-\frac{\varrho}{2}}<e^{\varsigma}$ and $e^{-\frac{\varrho}{2}}\int_{0}^{1}c_3(r) e^{\int_{r}^{1}c_4(s)ds}\mathrm{d}r<e^{\varsigma}$ then result of the theorem hold. By a  simple computation, we  see that $\frac{\varrho}{2}e^{[\frac{\varrho}{2}+L_v  (hL_f+bL_{\tilde{g}})]}+\tau-1$ $<$ $0$ implies the above claim holds and hence completes the proof.
\end{proof}

\section{Numerical Simulations}
This section is devoted to some numerical simulations in order to demonstrate the effectiveness and efficiency of the established theoretical results. In our numerical examples, we consider the following stochastic delayed HNNM.
 \begin{equation}\label{7.1}
\left\{\begin{aligned}
d u_1(t)=& [-c_1 u_1(t)+h_{11} f_1\left(u_1(t)\right)+h_{12} f_2\left(u_2(t)\right)+ \\
& b_{11} g_1\left(u_1\left(t-\tau_1\right)\right)+b_{12} g_2\left(u_2\left(t-\tau_2\right)\right)]dt\\
&+\sigma_{11} u_1(t) d w_1(t) +\sigma_{12} u_2(t)  d w_2(t) \\
d u_2(t)=& [-c_2 u_2(t)+h_{21} f_1\left(u_1(t)\right)+h_{22} f_2\left(u_2(t)\right)+ \\
& b_{21} g_1\left(u_1\left(t-\tau_1\right)\right)+b_{22} g_2\left(u_2\left(t-\tau_2\right)\right)]dt\\
&+\sigma_{21} u_1(t)  d w_1(t) +\sigma_{22} u_2(t)  d w_2(t).
\end{aligned}\right.
\end{equation}
Take
$$u(t)=\left(u_{1}(t), u_{2}(t)\right)^{\mathrm{T}},$$
$$ u_t(\tau)=\left(u_{1}\left(t-\tau_{1}\right), u_{2}\left(t-\tau_{2}\right)\right)^{\mathrm{T}},$$
$$C=\left(\begin{array}{ll}
c_{1} & 0 \\
0 & c_{2}
\end{array}\right)=\left(\begin{array}{ll}
5 & 0 \\
0 & 5
\end{array}\right),$$

$$
\begin{aligned}
&H=\left(\begin{array}{ll}
h_{11} & h_{12} \\
h_{21} & h_{22}
\end{array}\right)=\left(\begin{array}{ll}
0.2 & 0.1 \\
0.3 & 0.1
\end{array}\right),
\end{aligned}
$$

$$
\begin{aligned}
&B=\left(\begin{array}{ll}
b_{11} & b_{12} \\
b_{21} & b_{22}
\end{array}\right)=\left(\begin{array}{cc}
-0.3 & 0.2 \\
0.1 & 0.3
\end{array}\right),
\end{aligned}
$$
$$
\begin{aligned}
&\sigma=\left(\begin{array}{ll}
\sigma_{11} & \sigma_{12} \\
\sigma_{21} & \sigma_{22}
\end{array}\right)=\left(\begin{array}{cc}
0.01 & 0 \\
0 & 0.02
\end{array}\right),
\end{aligned}
$$
$f_j(u_j)=g_j(u_j)=\tanh (u_j)=\frac{e^{u_j}-e^{-u_j}}{e^u_j+e^{-u_j}}$
for $j=1,2$.

%Hence, the solution  $v(t, \omega, \cdot): \mathbb{R}^{n} \rightarrow \mathbb{R}^{n}$  of \eqref{2.1} satisfies
%$$\|v(t, \omega, \cdot)\|\leq 0.02 |\omega(t)|$$
%and
Hence the characteristic equation of \eqref{7.1} is
\begin{equation}\label{7.2}
\begin{array}{l}
\operatorname{det}\left[\left(\begin{array}{ll}
\lambda-5-0.3e^{-\lambda\tau_1}  & 0.2e^{-\lambda\tau_1}\\
0.1e^{-\lambda\tau_2} & \lambda-5+0.3e^{-\lambda\tau_2}
\end{array}\right)\right]=0,
\end{array}
\end{equation}

We first choose $\tau_1=\tau_2=0.1$ and hence $\mathbf{Hypothesis \  A1-\ A2}$ hold and $\varrho<0$. Thus it follows from \ref{thm5.1} that the equation admits a random attractor. Moreover,  $\frac{\varrho}{2}e^{[\frac{\varrho}{2}+L_v  (hL_f+bL_{\tilde{g}})]}+\tau-1$ $<$ $0$ also holds and it follows from Theorem \ref{Theorem 6} the attractor is a unique exponentially attractive  tempered random fixed point, i.e. the trivial fixed point $(0,0)$, which is shown in Fig 1.
In Fig 1a, the initial condition is  chosen to be $\phi(t)=(0.1, 0.2)$ for $t\in [-0.1,0]$ while in In Fig 1b, the initial condition is first chosen to be $\phi(t)=(10, 20)$ for $t\in [-0.1,0]$. From which we can see no matter how large the initial condition is, the state will tend to $(0,0)$.
\begin{figure}[htb]
\centering
\subfigure[]{
\includegraphics[width=0.45\textwidth]{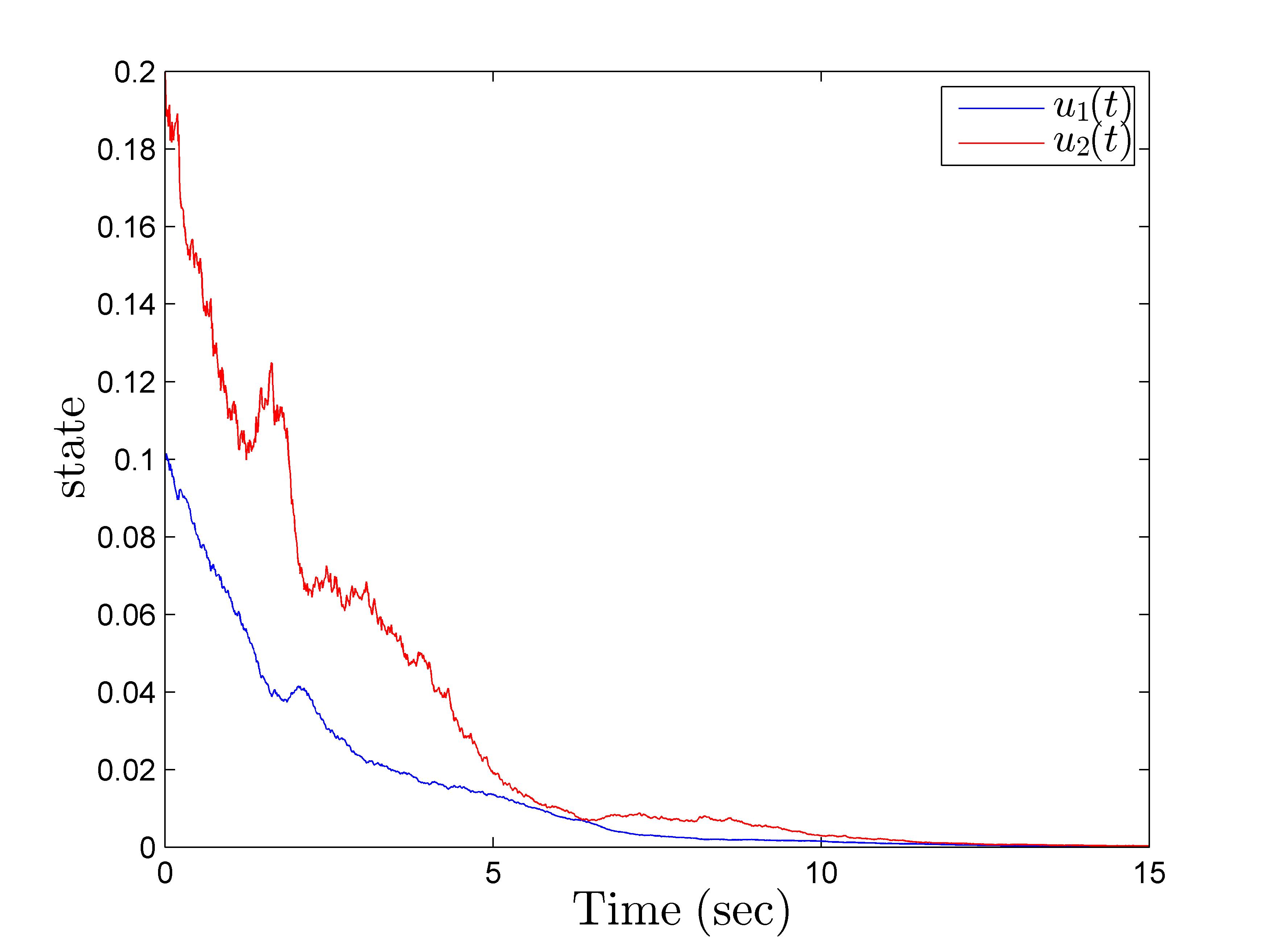}}
\subfigure[]{
\includegraphics[width=0.45\textwidth]{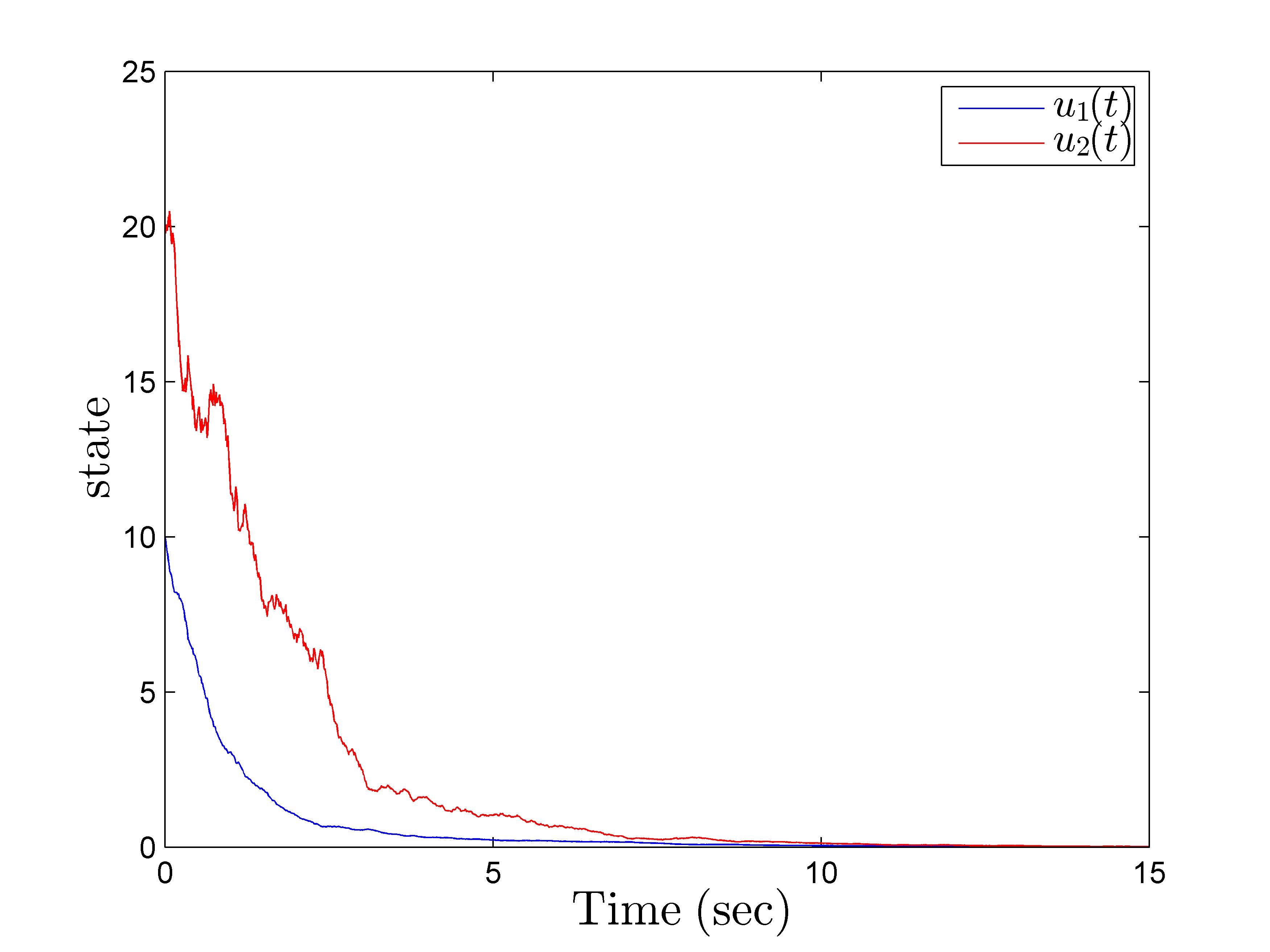}}
\caption{Solutions of (\ref{7.1}) with $\tau_1=\tau_2=0.1$  (a) initial condition $\phi(t)=(0.1, 0.2), t\in [-0.1,0]$, (b) initial condition $\phi(t)=(10, 20), t\in [-0.1,0]$.}
\end{figure}

\section{Summary and discussions}\label{Conclusions}
In this paper, we have obtained the existence and qualitative property of random attractors for \eqref{1.3}, which first required showing that the system generates a random dynamicla system. . We showed  under certain conditions  that the random attractor is a  globally exponentially attracting random stationary solution. From dynamical system theory, the conditions for the attractors to  be fixed point are so strong that could be hardly met in the real world applications. Indeed, from the dissipative system theory, if estimation  on the dimension of random attractors can  provide  researchers with useful information about the structure of the random attractor. Hence, the topological dimension for the random attractors of \eqref{1.3} will be investigated in a future paper. Furthermore, in order to obtain the  global complex dynamics and nonlocal analysis of the qualitative properties of the system, the existence and structure  of the associated invariant manifolds of the stationary solutions, the existence of connecting orbits (including the heteroclinic orbits or homoclinic orbits) are all of  significance and deserve  more attention.
\section{Acknowledgement}
This work was jointly supported by the National Natural Science Foundation
of China (62173139), China Postdoctoral Science Foundation(2019TQ0089),
Hunan Provincial Natural Science Foundation of China (2020JJ5344,
2019RS1033) the Science and Technology Innovation Program of Hunan
Province (2021RC4030), the Scientific Research Fund of Hunan Provincial
Education Department (20B353).
\section*{Acknowledgments}
The author thanks Dr Ding Kui for assistance of numerical simulations.


\begin{thebibliography}{10}

\bibitem{AL}
L.~Arnold, \emph{Random Dynamical System}.\hskip 1em plus 0.5em minus
  0.4em\relax Springer Science \& Business Media, 1998.

\bibitem{17}
H.~Bessaih, M.~J. Garrido-Atienza, and B.~Schmalfuss, ``Pathwise solutions and
  attractors for retarded spdes with time smooth diffusion coefficients,''
  \emph{Disc. Contin. Dyn. Syst.}, vol.~34, no.~1, pp. 3945--3968, 2014.

\bibitem{CKS04}
T.~Caraballo, P.~E. Kloeden, and B.~Schmalfu{\ss}, ``Exponentially stable
  stationary solutions for stochastic evolution equations and their
  perturbation,'' \emph{Appl. Math. Optim.}, vol.~50, pp. 183--207, 2004.



\bibitem{10}
H.~Crauel, ``Random point attractors versus random set attractor,'' \emph{J.
  London Math. Soc.}, vol.~63, no.~1, pp. 413--427, 2002.

\bibitem{9}
H.~Crauel and F.~Flandoli, ``Attractors for random dynamical systems,''
  \emph{Probab. Theory Relat Fields}, vol. 100, no.~1, pp. 365--393, 1994.

\bibitem{CK15}
H.~Crauel and P.~E. Kloeden, ``Nonautonomous and random attractors,''
  \emph{Jahresber. Dtsch. Math. Ver.}, vol. 117, pp. 173--206, 2015.

\bibitem{DLS03}
J.~Duan, K.~Lu, and B.~Schmalfuss, ``Invariant manifolds for stochastic partial
  differential equations,'' \emph{Ann Probab}, vol.~31, pp. 2109--2135, 2003.



\bibitem{FS}
F.~Flandoli and B.~Schmalfuss, ``Random attractors for the 3d stochastic
  navier-stokes equation with multiplicative white noise,'' \emph{Stoch. Stoch.
  Proc.}, vol.~59, no.~1, pp. 21--45, 1996.

\bibitem{Gopalsamy1994Stability}
K.~Gopalsamy and X.~Z. He, ``Stability in asymmetric hopfield nets with
  transmission delays,'' \emph{Physica D-Nonlinear Phenomena}, vol.~76, no.~4,
  pp. 344--358, 1994.

\bibitem{GL2022}
S.~Guo and S.~Li, ``Invariant measure and random attractors for stochastic
  differential equations with delay,'' \emph{Qualitative Theory of Dynamical
  Systems}, vol.~21, pp. 1--38, 2022.

\bibitem{JH}
J.~Hale, \emph{Asymptotic Behavior of Dissipative Systems}.\hskip 1em plus
  0.5em minus 0.4em\relax American Mathematical Society, Providence, 1988.

\bibitem{HKU19}
X.~Han, P.~E. Kloeden, and B.~Usman, ``Long term behavior of a random hopfield
  neural lattice model. 18, 809¨c824 (2019),'' \emph{Commun. Pure Appl. Anal.},
  vol.~18, pp. 809--824, 2019.

\bibitem{Haykin}
S.~Haykin, \emph{Neural Networks}.\hskip 1em plus 0.5em minus 0.4em\relax New
  Jersey: Prentice-Hall, 1994.

\bibitem{Hopfield1986Computing}
J.~J. Hopfield and D.~W. Tank, ``Computing with neural circuits: a model,''
  \emph{Science}, vol. 233, no. 4764, pp. 625--633, 1986.

\bibitem{hopfield1984neurons}
J.~J. Hopfield, ``Neurons with graded response have collective computational
  properties like those of two-state neurons,'' \emph{Proceedings of the
  National Academy of Sciences}, vol.~81, no.~10, pp. 3088--3092, 1984.


\bibitem{HZ22}
W.~Hu and Q.~Zhu, ``Spatial-temporal dynamics of a non-monotone
  reaction-diffusion hopfield¡¯s neural network model with delays,''
  \emph{Neural Comput Appl}, vol.~34, pp. 11\,199--11\,212, 2022.



\bibitem{IW}
N.~Ikeda and S.~Watanabe, \emph{Stochastic Differential Equations and Diffusion
  Processes}.\hskip 1em plus 0.5em minus 0.4em\relax North
  Holland-Kodansha-Amsterdam-Tokyo, 1981.

\bibitem{tw}
T.~K, ``An extension of the wong-zakai theorem for stochastic evolution
  equations in hilbert spaces,'' \emph{Stochastic Analysis and Applications},
  vol.~10, pp. 471--500, 1992.

\bibitem{PT2012}
P.~E. Kloeden and T.~Lorenz, ``Mean-square random dynamical systems,'' \emph{J.
  Differential Equations}, vol. 253, pp. 1422--1438, 2012.

\bibitem{PL12}
------, ``Mean-square random dynamical systems,'' \emph{J. Differ. Equ.}, vol.
  253, pp. 1422--1438, 2012.

\bibitem{KY21}
P.~E. Kloeden and M.~Yang, \emph{An Introduction to Nonautonomous Dynamical
  Systems and their Attractors}.\hskip 1em plus 0.5em minus 0.4em\relax World
  Scientific Publishing Co. Pte. Ltd., 2021.

\bibitem{LD2017mean}
X.~Li and D.~Ding, ``Mean square exponential stability of stochastic hopfield
  neural networks with mixed delays,'' \emph{Statistics Probability Letters},
  vol. 126, pp. 88--96, 2017.

\bibitem{LYW08}
Y.~Li, Y.~Yang, and L.~Wu, ``Delay-distribution-dependent exponential stability
  criteria for discrete-time recurrent neural networks with stochastic delay,''
  \emph{IEEE Transactions on Neural Networks}, vol.~19, pp. 1299--1306, 2008.

\bibitem{liz2013attractivity}
E.~Liz and A.~Ruiz-Herrera, ``Attractivity, multistability, and bifurcation in
  delayed hopfield's model with non-monotonic feedback,'' \emph{Journal of
  Differential Equations}, vol. 255, no.~11, pp. 4244--4266, 2013.

\bibitem{marcus1989stability}
C.~Marcus and R.~Westervelt, ``Stability of analog neural networks with
  delay,'' \emph{Physical Review A}, vol.~39, no.~1, p. 347, 1989.

\bibitem{MS2003}
S.~Mohammed and M.~Scheutzow, ``The stable manifold theorem for non-linear
  stochastic systems with memory. i. existence of the semiflow,'' \emph{Journal
  of Functional Analysis}, vol. 205, pp. 271--305, 2003.

\bibitem{MS2004}
------, ``The stable manifold theorem for non-linear stochastic systems with
  memory. ii. the local stable manifold theorem,'' \emph{Journal of Functional
  Analysis}, vol. 206, pp. 253--306, 2004.

\bibitem{M1984}
S.~E.~A. Mohammed, \emph{Stochastic Functional Differential Equations}.\hskip
  1em plus 0.5em minus 0.4em\relax Pitman Advanced Publishing Program, Boston
  London-Melbourne, 1984.

\bibitem{M90}
S.-E.~A. Mohammed, ``The lyapunov spectrum and stable manifolds for stochastic
  linear delay equations,'' \emph{Stochastics and Stochastic Reports}, vol.~29,
  pp. 89--131, 1990.

\bibitem{Morita1993Associative}
M.~Morita, ``Associative memory with nonmonotone dynamics,'' \emph{Neural
  Networks}, vol.~6, no.~1, pp. 115--126, 1993.

\bibitem{SWHK20}
M.~Sui, Y.~Wang, X.~Han, and P.~E. Kloeden, ``Random recurrent neural networks
  with delays,'' \emph{J. Differ. Equ.}, vol. 269, pp. 8597--8639, 2020.

\bibitem{SC2007pth}
Y.~Sun and J.~Cao, ``p-th moment exponential stability of stochastic recurrent
  neural networks with time-varying delays,'' \emph{Nonlinear Anal RWA},
  vol.~9, pp. 1171--1185, 2007.

\bibitem{B76}
B.-R.~A. T, ``Fixed point theorems in probabilistic analysis,'' \emph{Bull.
  Amer. Math. Soc.}, vol. 269, pp. 8597--8639, 1976.

\bibitem{tank1987neural}
D.~W. Tank and J.~Hopfield, ``Neural computation by concentrating information
  in time,'' \emph{Proceedings of the National Academy of Sciences}, vol.~84,
  no.~7, pp. 1896--1900, 1987.

\bibitem{Van1998Global}
P.~Van, den~Driessche and X.~Zou, ``Global attractivity in delayed hopfield
  neural network models,'' \emph{SIAM Journal on Applied Mathematics}, vol.~58,
  no.~6, pp. 1878--1890, 1998.

\bibitem{VRS}
M.~Varzaneh~Ghani, S.~Riedel, and M.~Scheutzow, ``A dynamical theory for
  singular stochastic delay differential equations i: Linear equations and a
  multiplicative ergodic theorem on fields of banach spaces,'' \emph{SIAM
  Journal on Applied Dynamical Systems}, vol.~21, pp. 542--587, 2022.

\bibitem{WS2005mean}
L.~Wan and J.~Sun, ``Mean square exponential stability of stochastic delayed
  hopfield neural networks,'' \emph{Phys. Lett. A}, vol. 344, pp. 306--318,
  2005.

\bibitem{WKH21}
X.~Wang, P.~E. Kloeden, and X.~Han, ``Stochastic dynamics of a neural field
  lattice model with state dependent nonlinear noise,'' \emph{Nonlinear Differ.
  Equ. Appl.}, vol.~28, p.~43, 2021.

\bibitem{WK}
F.~Wu and P.~Kloeden, ``Mean-square random attractors of stochastic delay
  differential equations with random delay,'' \emph{Discrete Contin. Dyn. Syst.
  Ser. B}, vol.~18, pp. 1715--1734, 2013.

\bibitem{WWC09}
Y.~Wu and Y.~Chen, ``Mean square exponential stability of uncertain stochastic
  neural networks with time-varying delay,'' \emph{Neurocomputing}, vol.~72,
  pp. 2379--2384, 2009.

\bibitem{WWC11}
------, ``A unified approach to the stability of generalized static neural
  networks with linear fractional uncertainties and delays,'' \emph{IEEE
  Transactions on Systems Man Cybernetics Part B}, vol.~41, pp. 1275--1286,
  2011.

\bibitem{YLJ2010}
W.~Yan, Y.~Li, and S.~Ji, ``Random attractors for first order stochastic
  retarded lattice dynamical systems,'' \emph{J. Math. Phys.}, vol.~51, p.
  032702, 2003.

\bibitem{YWK21}
L.~Yang, Y.~Wang, and P.~E. Kloeden, ``Pullback exponential attractors for
  non-autonomous recurrent neural networks with discrete and distributed
  time-varying delays,'' \emph{J Dyn Diff Equat}, pp.
  https://doi.org/10.1007/s10\,884--021--09\,991--3, 2021.

\bibitem{YOSHIZAWA1993Capacity}
M.~Yoshizawa and S.~Amari, ``Capacity of associative memory using a
  nonmonotonic neuron model,'' \emph{Neural Networks}, vol.~6, no.~2, pp.
  167--176, 1993.

\bibitem{ZC2014pth}
Q.~Zhu and J.~Cao, ``Mean-square exponential input-to-state stability of
  stochastic delayed neural networks,'' \emph{Neurocomputing}, vol.~9, no.~1,
  pp. 157--163, 2014.
\end{thebibliography}
\end{document}